\newcommand{\levy}{L\'{e}vy }
\newcommand{\p}{{\mathbb P}}
\newcommand{\e}{{\mathbb E}}
\newcommand{\D}{{\mathrm d}}
\newcommand{\1}[1]{\mbox{\rm\large  1}_{\{#1\}}}
\newcommand{\bs}{\boldsymbol}
\newcommand{\bss}{\bs s}
\newcommand{\bX}{\bs X}
\newcommand{\R}{\mathbb R}
\newcommand{\bx}{\bs x}
\newcommand{\bc}{\bs c}
\newcommand{\inner}[1]{\left\langle #1\right\rangle}
\renewcommand{\l}{\lambda}
\newcommand{\ii}{{\mathrm i}}
\newtheorem{theorem}{Theorem}
\newtheorem{cor}{Corollary}
\newtheorem{prop}{Proposition}
\newtheorem{remark}{Remark}
\begin{document}
\title{A bivariate risk model with mutual deficit coverage}
\author{Jevgenijs Ivanovs}
\ead{jevgenijs.ivanovs@unil.ch}
\address{Department of Actuarial Science, University of Lausanne, CH-1015 Lausanne, Switzerland}
\author{Onno Boxma}
\ead{o.j.boxma@tue.nl}
\address{Department of Mathematics and Computer Science, Eindhoven University of Technology, 5600 MB Eindhoven, The Netherlands}
\begin{abstract}
We consider a bivariate Cram\'er-Lundberg-type risk reserve process with the special feature that each insurance company agrees to cover the deficit of the other. 
It is assumed that the capital transfers between the companies are instantaneous and
incur a certain proportional cost, and that ruin occurs when neither company can cover the deficit of the other.
We study the survival probability as a function of initial capitals and express its bivariate transform
through two univariate boundary transforms, where one of the initial capitals is fixed at~0.
We identify these boundary transforms in the case when claims arriving at each company form two independent processes.
The expressions are in terms of Wiener-Hopf factors associated to two auxiliary compound Poisson processes.
The case of non-mutual agreement is also considered. The proposed model shares some features of a contingent surplus note instrument and may be of interest in the context of crisis management.
\end{abstract}
\begin{keyword}Two-dimensional risk model\sep survival probability\sep coupled processor model\sep Wiener-Hopf factorization\sep surplus note \sep mutual insurance 
\end{keyword}
\maketitle

\section{Introduction}
Insurance companies cannot operate in isolation from financial markets or from other insurance and reinsurance companies.
Hence it is important to understand the effect of interaction on the main characteristics of an insurance company.
Multivariate risk models, however, present a serious mathematical challenge with few explicit results up to date, see~\cite[Ch.\ XIII.9]{asmussen2010ruin}.
This paper focuses on a nonstandard, but rather general bivariate risk model 
and provides an exact analytic study of the corresponding survival probability,
borrowing some ideas from the analysis of a somewhat related queueing problem in~\cite{boxma_ivanovs}. The proposed model may be of interest in the context of crisis management due to its relation to contingent surplus notes and mutual insurance. Moreover, it allows for an explicit structural result without imposing overly strict assumptions such as proportional or dominating claims, see Section~\ref{sec:literature}.

We consider a bivariate Cram\'er-Lundberg risk process as a model of surplus of two insurance companies (or two lines of one insurance business).
The special feature of our model is that the companies have a mutual agreement
to cover the deficit of each other.
More precisely, if company $1$ gets ruined, with its capital decreasing to a value $-x<0$,
then company $2$ compensates this deficit, bringing the capital of company $1$ back to $0$.
However, this comes at a price; a unit of capital received by company $1$ requires $r_1 \geq 1$ from company $2$ (cf.\ Figure 1).
If this would cause the capital of company $2$ to go below $0$, then both companies are said to be ruined.
Similarly, if company $2$ gets a deficit $-y<0$, company $1$ compensates this deficit, but its capital
reduces by $r_2 y \geq y$;
and if this would cause the capital of company $1$ to go below $0$, then again
both companies are said to be ruined.
Finally, ruin may also be caused by a single event bringing the surplus processes of both companies below~0. It may be more realistic to assume that if one company can not save the other from ruin then it does not transfer any capital at that instant and continues to operate. Note, however, that survival of both companies in this set-up corresponds to our previous notion of survival. 

\begin{figure}[h!]
  \centering
  \includegraphics{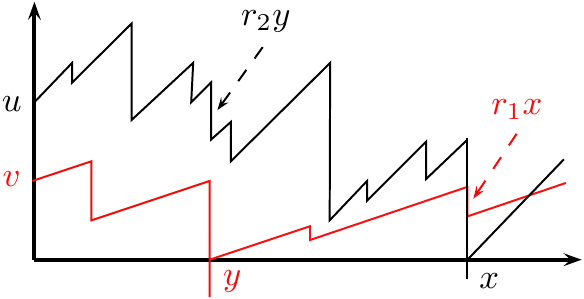}
  \caption
  {Illustration of mutual deficit coverage (no common shocks).}
\label{fig}
\end{figure}

Our main goal in this study is to provide an exact analysis of the (Laplace transform of the) probability of survival (i.e., ruin never occurs) $\phi(u,v)$, as a function
of the vector $(u,v)$ of initial capitals.
We do this by (i) expressing the two-dimensional Laplace transform $F(s_1,s_2)$ of $\phi(u,v)$
in terms of the transforms of $\phi(u,0)$ and $\phi(0,v)$,
and (ii) determining the latter two transforms by solving a Wiener-Hopf  boundary value problem in the case of independent claim streams.
In the latter step, a key role is played by the Wiener-Hopf factorization of two auxiliary compound Poisson processes.

In our terminology, if one company cannot save the other then both are declared ruined,
whereas in practice it is more likely that the company with a positive capital pays out all it has, but then continues its operation.
Hence it may be more appropriate to call $\phi$ the probability of survival of {\em both} companies, or the probability that external help is never needed, see
 Remark~\ref{Eremark}.
One may also notice that it is clearly better (with respect to survival) to merge the two lines eliminating transaction costs.
There may be cases, however, where a merger is not possible due to legal, regulatory or other issues.
Outside of an insurance context, the two lines may be two separate physical entities such as water reservoirs or energy sources.
Furthermore, one may consider the case where $r_i<1$ for at least one company, see Remark~\ref{rem:less1}, so that merging may not be optimal.
This may correspond to the case where part of the deficit is written off or is covered by some other fund.

Our model resembles two insurance companies with a mutual insurance fund, which
is used to cover the deficit of each of them. In our case, however, there is no separate fund - it is the other company which provides the capital to cover the deficit.
Another related notion is that of a contingent surplus note which is a form of a CoCo bond issued by an insurance company. This bond pays a higher coupon, because of the risk that it is converted into surplus if a trigger event (e.g.\ deficit) occurs. Finally, we may consider an insurance company and a governmental fund which is used to save companies from default. In this case capital transfers are only possible from the fund to the company; this particular scenario is discussed in Section~\ref{sec:restricted}.

The above-sketched risk model bears some resemblance to a two-dimensional queueing model
of two coupled processors.
This model features two $M/G/1$ queues, each of which, in isolation, is known to be the dual
of a Cram\'er-Lundberg insurance risk model, cf.\ \cite[Ch.\ I.4a]{asmussen2010ruin}.
Just like in our risk model, the two processors are coupled by the agreement to help each other.
When one of the two $M/G/1$ queues becomes empty,
the service speed of the other server -- say, server $i$ -- increases from $1$ to $r_i \geq 1$, i.e. server $i$ is being helped by the idle server.
It should be stressed that this similarity is rather loose and that there is no clear duality relation between these bivariate risk and queueing models.
Strikingly, the crucial ideas of the analysis of the coupled processor model in~\cite{boxma_ivanovs} apply to our present setup as well.
Moreover, solutions to both problems are based on the same Wiener-Hopf factors, which hints that there might be a certain duality relation between the two.

\subsection{Related literature}\label{sec:literature}
Despite their obvious relevance,
exact analytic studies of multidimensional risk reserve processes are scarce in the insurance literature.
A special, important case is the setting of proportional reinsurance,
which was studied in Avram et al. \cite{avram2008two}. There it is assumed that there
is a single arrival process, and the claims are proportionally split among two reserves.
In this case, the two-dimensional exit (ruin) problem becomes a {\em one}-dimensional
first-passage problem above a piece-wise linear barrier. Badescu et al.~\cite{badescu2011two}
have extended this model by allowing a dedicated arrival stream of claims into
only one of the insurance lines. They show that the transform of the time to ruin of at least
one of the reserve processes can be derived using similar ideas as in \cite{avram2008two}.

An early attempt to assess multivariate risk measures can be found in Sundt~\cite{Sundt}, where  multivariate Panjer recursions are developed
which are then used to compute the distribution of the aggregate claim process, assuming simultaneous claim events and discrete claim sizes.
Other approaches are deriving integro-differential equations for the various measures of risk
and then iterating these equations to find numerical approximations \cite{chan2003some,gong2012recursive},
or computing bounds for the different types of ruin probabilities that can occur in a setting
where more than one insurance line is considered, see~\cite{cai2005multivariate, cai2007dependence}.
In an attempt to solve the integro-differential equations that arise from such models, Chan et al.~\cite{chan2003some}
derive a Riemann-Hilbert boundary value problem for the bivariate Laplace transform
of the joint survival function (see~\cite{paper2} for details about such problems arising in the context
of risk and queueing theory and the book \cite{cohen_boxma} for an extended analysis of similar models in queueing).
However, this functional equation for the Laplace transform is not solved in \cite{chan2003some}.
In~\cite{paper2} a similar functional equation is taken as a departure point,
and it is explained how one can find transforms of ruin related performance measures
via solutions of the above mentioned boundary value problems.
It is also shown that the boundary value problem has an explicit solution in terms of transforms,
if the claim sizes are ordered. In \cite{paper3} this is generalized to the case in which the claim amounts
are also correlated with the time elapsed since the previous claim arrival.

Bivariate models where one company can transfer its capital to the other have already been considered in the literature.
Recently, Avram and Pistorius~\cite{florin} proposed a model of an insurance company which splits its premiums between a reinsurance/investement fund
and a reserves fund necessary for paying claims. In their setting only the second fund receives claims, and hence all capital transfers are one way: from the first fund to the second.
Another example is a capital-exchange agreement from~\cite[Ch.\ 4]{lautscham_thesis}, where two insurers pay dividends according to a barrier strategy
and the dividends of one insurer are transferred to the other unless the other is also fully capitalized.
This work resulted in systems of integro-differential equations for the expected time of ruin and expected discounted dividends,
which are hard to solve even in the case of exponential claims.

Finally, we briefly list related contributions in the queueing context.
The joint queue length distribution of the coupled processor model has been derived by Fayolle and Iasnogorodski \cite{FI},
in the case that the service time distributions at both queues are exponential. In their pioneering paper,
they showed how the generating function of the joint steady-state queue length distribution can be
obtained by solving a Riemann-Hilbert boundary value problem.
Cohen and Boxma \cite{cohen_boxma} generalized this queueing model by allowing general service time distributions.
They obtained the Laplace-Stieltjes transform of the joint steady-state workload distribution
by solving a Wiener-Hopf boundary value problem.
In \cite{boxma_ivanovs} the model of \cite{cohen_boxma} was extended by considering a pair of coupled queues driven by independent
spectrally positive L\'evy processes and a compact solution was obtained. There the model was also linked to a two-server fluid network.


\subsection{Organization of the paper}
In Section~\ref{Model} we describe the model in detail.
In Section~\ref{Survival} we derive an integral equation for the survival probability $\phi(u,v)$, as a function
of the vector $(u,v)$ of initial amounts of capital.
Section~\ref{Kernel} is devoted to the derivation of a so-called kernel equation for the two-dimensional Laplace transform
of $\phi(u,v)$, see Proposition~\ref{prop:kernel}. After a brief discussion of the net profit condition, in Section~\ref{Profit},
we solve the kernel equation in the case of independent claim streams in Section~\ref{Solution}. The main result is given in Theorem~\ref{thm:main},
which is then illustrated by two simple examples.
In Section~\ref{sec:restricted} we specialize to a model of non-mutual insurance, where help is provided by only one company (or government).
Some open problems are discussed in Section~\ref{Open}.

\section{The model}
\label{Model}

Consider a bivariate Cram\'er-Lundberg risk process $(u+X_1(t),v+X_2(t)),t\geq 0$ with initial capitals $u,v>0$, premium rates $c_i>0$,
claim arrival rate $\lambda$ and a joint claim size distribution $\mu(\D x_1,\D x_2)$.
In other words,
\[(X_1(t),X_2(t))=(c_1,c_2)t-\sum_{i=1}^{N(t)}(C_{1,i},C_{2,i}),\]
where $(C_{1,i},C_{2,i}),i\in\mathbb N$ are iid distributed according to~$\mu$, and $N(t)$ is an independent Poisson process of rate~$\lambda$.
Without loss of generality we can assume that $\mu$ does not have mass at $(0,0)$.
Next, we define the corresponding bivariate Laplace exponent by
\begin{align}\label{eq:transform}
 &\psi(s_1,s_2):=\\&\log \e e^{s_1X_1(1)+s_2X_2(1)}=s_1c_1+s_2c_2-\lambda\left(1-\int_{\mathbb R^2_+} e^{-s_1x_1-s_2x_2}\mu(\D x_1,\D x_2)\right).\nonumber
\end{align}
\begin{remark}\label{rem:independent}
Note that this model incorporates the case of two independent risk processes with claim arrival rates $\lambda_i$ and claim size distributions $\mu_i(\D x)$ with the relation
\begin{align*}
 &\lambda=\lambda_1+\lambda_2, &\mu(\D x,\D y)=\frac{\lambda_1}{\lambda_1+\lambda_2}\mu_1(\D x)\delta_0(\D y)+\frac{\lambda_2}{\lambda_1+\lambda_2}\mu_2(\D y)\delta_0(\D x),
\end{align*}
where $\delta_0$ denotes the Dirac point mass at~0. This yields $\psi(s_1,s_2)=\psi_1(s_1)+\psi_2(s_2)$ with
\[\psi_i(s):=\log \e e^{sX_i(1)}=sc_i-\lambda_i\left(1-\int_{\mathbb R_+} e^{-sx}\mu_i(\D x)\right), ~~~i=1,2.\]
\end{remark}

Throughout this work we assume that the companies have a mutual agreement of deficit coverage:
when a claim brings the surplus of a company $i\in\{1,2\}$ below 0 then the other company $j=3-i$ transfers just enough capital to bring it to 0
so that company $i$ can continue its operation.
If company $j$ has not enough capital then ruin occurs.
We also assume that a capital transfer incurs some proportional cost: a unit of capital received by $i$ requires $r_i\geq 1$ capital from $j$.
It is noted that in the case of no transaction costs our risk problem reduces to a classical one-dimensional problem obtained by considering the total surplus, see Section~\ref{sec:nocost}.
\begin{remark}\label{rem:less1}
All of the results below hold for any $r_i\geq 0$ such that $r_1r_2\geq 1$.
Even the latter condition can be removed, except that then the second statement in~Proposition~\ref{prop:phi} may no longer be true.
\end{remark}

It is convenient to consider a more general model defined for all times $t\geq 0$,
where ruin is avoided using capital from external sources.
More precisely, if company $j$ has insufficient funds to save $i$ then it transfers to $i$ all its capital and the rest of the required capital is taken from the shareholders of company~$i$
(or some other external source).
Mathematically, we can describe the bivariate surplus process $(S_1(t),S_2(t))$ in the following way:
\begin{align}\label{eq:def}
S_1(t)=u+X_1(t)-r_2L_2(t)+L_1(t)+E_1(t),\nonumber\\
S_2(t)=v+X_2(t)-r_1L_1(t)+L_2(t)+E_2(t),
\end{align}
where $L_i(t)$ and $E_i(t)$ are the cumulative amounts of capital received by the $i$-th company from the other company and from an external source respectively; $L_i(0)=E_i(0)=0$.
Note that the time of ruin in the original model is given by
\begin{equation}\label{eq:tau}
 \tau=\inf\{t\geq 0:E_1(t)+E_2(t)>0\}.
\end{equation}
In this work we only address the probability of survival $\phi(u,v):=\p_{u,v}(\tau=\infty)$.
Note that in our setup the concepts of ruin and survival have unambiguous meaning,
which should be compared to various possible ruin concepts in standard bivariate risk models, see~\cite{chan2003some}.
In the following we provide two mathematical definitions of the processes involved in~\eqref{eq:def}:
A recursive definition and a definition as a modified Skorokhod problem.

\begin{remark}
\label{Eremark}
 We note that the external sources $E_i$ are introduced only for modeling convenience. Throughout this work survival means survival in the original model which
corresponds to never using external capital in the extended model.
Hence $E_i$ never appear in the following apart from the discussion of the net profit condition in Section~\ref{Profit} and direct calculations in Section~\ref{sec:nocost}.
Nevertheless, we believe that it is important to have an explicit model definition as given by~\eqref{eq:def} and~\eqref{eq:tau}, see also Section~\ref{Open}.
\end{remark}

\subsection{Recursive definition}\label{sec:recursive}
Let us remark that the processes $L_i,E_i$ are piece-wise constant and non-decreasing, and can have jumps at claim arrival times $t_1,t_2,\ldots$ only.
One may define these processes recursively by considering their jumps at time $t_n$ for $n=1,2,\ldots$
More precisely, with $x_i=S_i(t_n-)+(X_i(t_n)-X_i(t_n-))$ we have the following cases:
\begin{itemize}
 \item $x_1,x_2\geq 0$: $L_i,E_i$ have no jumps (at $t_n$),
\item $x_1<0,x_2+r_1x_1\geq 0$: $L_1$ has a jump $-x_1$ (and no other jumps at $t_n$),
\item $x_2<0,x_1+r_2x_2\geq 0$: $L_2$ has a jump $-x_2$,
\item $x_1<0,x_2\geq 0,x_2+r_1x_1<0$: $L_1$ has a jump $x_2/r_1$ and $E_1$ has a jump $-x_1-x_2/r_1$,
\item $x_2<0,x_1\geq 0,x_1+r_2x_2<0$: $L_2$ has a jump $x_1/r_2$ and $E_2$ has a jump $-x_2-x_1/r_2$,
\item $x_1<0,x_2<0$: $E_i$ has a jump $-x_i$ for $i=1,2$.
\end{itemize}

\subsection{Modified Skorokhod problem definition}\label{sec:skorokhod}
The following equivalent specification of our model closely resembles a multidimensional Skorokhod problem, see~\cite{harrison_rieman_reflected,kella_reflecting}
building upon Skorokhod's one dimensional construction~\cite{skorokhod_refl}.
It is noted that for the classical bivariate Skorokhod problem one must assume that $r_1r_2<1$ which is clearly violated in our setup.

We consider the equations in~\eqref{eq:def} with the following additional requirements:
\begin{itemize}
 \item $S_i\geq 0$ and $L_i,E_i$ are piece-wise constant and non-decreasing, 
\item the jump times of $L_i$ are contained in $\{t\geq 0:S_i(t)=0\}$,
\item the jump times of $E_i$ are contained in $\{t\geq 0:S_1(t)=S_2(t)=0\}$,
\item out of $L_1,L_2,E_1,E_2$ the only pairs which can jump simultaneously are $(L_1,E_1),(L_2,E_2)$ and $(E_1,E_2)$.
\end{itemize}
It is easy to check that these requirements are satisfied by the recursive construction from Section~\ref{sec:recursive},
and that no other choices of $L_i,E_i$ and thus $S_i$ satisfy these conditions.

A few words about the above requirements. The second states that a company receives capital when in need,
and the third states that external capital can be used only when both companies are in need (or one is at 0 after trying to save the other).
The final condition is necessary to prevent redundant capital transfers.

\section{The survival probability}
\label{Survival}

As stated in Remark~\ref{Eremark}, we now ignore the possibility of funding by external sources.
Consider $\phi(u,v)=\p_{u,v}(\tau=\infty)$, the probability of survival for the initial capitals $u,v\geq 0$, and put for convenience $\phi(u,v)=0$ if $u<0$ or $v<0$.
It is immediate from sample path comparison that $\phi(u,v)$ is non-decreasing in both $u,v$.
Furthermore, $\phi(u,v)$ is a (Lipschitz) continuous function on $\R^2_+$, which follows from
$\phi(u,v)\geq (1-\l h)\phi(u+c_1h,v+c_2h)+o(h)$ as $h\downarrow 0$ implying
\begin{equation}\label{eq:bound}
 \phi(u+c_1h,v+c_2h)-\phi(u,v)\leq \l h+o(h),
\end{equation}
where $o(h)$ does not depend on $u,v$.
Moreover, our model definition implies the following equation
\begin{align}\label{eq:lh}
\phi(u,v)&=o(h)+(1-\lambda h)\phi(u+c_1h,v+c_2h)\\
&+\l h \iint\limits_{x\in[0,u],y\in[0,v]}\phi(u-x,v-y)\mu(\D x,\D y)\nonumber\\
&+\l h \iint\limits_{x\in[0,u],y>v}\phi(u-x+r_2(v-y),0)\mu(\D x,\D y)\nonumber\\
&+\l h \iint\limits_{x>u,y\in[0,v]}\phi(0,v-y+r_1(u-x))\mu(\D x,\D y).\nonumber
\end{align}
In fact, this equation requires some careful considerations, and so its proof is given in the Appendix.

Equation~\eqref{eq:lh} implies that the directional derivative
\[D_{(c_1,c_2)}\phi(u,v)=\lim_{h\downarrow 0}\frac{\phi(u+c_1 h,v+c_2h)-\phi(u,v)}{h}\] exists and the following equation holds true.
\begin{align*}\phi(u,v)=\frac{1}{\l}D_{(c_1,c_2)}\phi(u,v)&+\iint\limits_{x\in[0,u],y\in[0,v]}\phi(u-x,v-y)\mu(\D x,\D y)\\
 &+ \iint\limits_{x\in[0,u],y>v}\phi(u-x+r_2(v-y),0)\mu(\D x,\D y)\nonumber\\
&+\iint\limits_{x>u,y\in[0,v]}\phi(0,v-y+r_1(u-x))\mu(\D x,\D y).
\end{align*}
\begin{remark}
 If the partial derivatives $\phi_1$ and $\phi_2$ exist and are continuous at $(u,v)$ then $D_{(c_1,c_2)}\phi(u,v)=c_1\phi_1(u,v)+c_2\phi_2(u,v)$.
This can not hold in general when $\mu$ has an atom at $(u,v)$, as can be seen from ~\eqref{eq:lh}.
\end{remark}

\section{The kernel equation}
\label{Kernel}
Equation~\eqref{eq:lh} can be equivalently formulated in terms of Laplace transforms. This results in a very simple identity given by the following Proposition.
\begin{prop}\label{prop:kernel}
Let $F(s_1,s_2):=\int_{\R_+^2} e^{-s_1u-s_2v}\phi(u,v)\D u\D v$ and
\begin{align*}
 &F_1(s_1)=\int_0^\infty e^{-s_1u}\phi(u,0)\D u, &F_2(s_2)=\int_0^\infty e^{-s_2v}\phi(0,v)\D v.
\end{align*}
Then for all $s_1,s_2> 0$ it holds that
 \begin{equation}\label{eq:kernel}
\psi(s_1,s_2)F(s_1,s_2)=\frac{\psi(s_1,s_2)-\psi(s_1,r_2s_1)}{s_2-r_2s_1}F_1(s_1)+\frac{\psi(s_1,s_2)-\psi(r_1s_2,s_2)}{s_1-r_1s_2}F_2(s_2).
\end{equation}
\end{prop}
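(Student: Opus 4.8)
The plan is to apply the bivariate Laplace transform $\int_{\R_+^2}e^{-s_1u-s_2v}(\cdot)\,\D u\,\D v$ to the integro-differential equation for $\phi$ that follows from \eqref{eq:lh} (the one involving $D_{(c_1,c_2)}\phi$) and to evaluate the four resulting terms one by one. Note first that $F,F_1,F_2$ are finite for $s_1,s_2>0$ because $0\le\phi\le1$. It is convenient to write $\hat\mu(s_1,s_2):=\int_{\R_+^2}e^{-s_1x-s_2y}\mu(\D x,\D y)$, so that \eqref{eq:transform} becomes $\lambda\hat\mu(s_1,s_2)=\lambda-s_1c_1-s_2c_2+\psi(s_1,s_2)$; this identity will be used repeatedly to convert transforms of $\mu$ into expressions in $\psi$.

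For the term $\tfrac1\lambda D_{(c_1,c_2)}\phi$, monotonicity of $\phi$ together with the uniform bound \eqref{eq:bound} shows that the difference quotient $h^{-1}[\phi(u+c_1h,v+c_2h)-\phi(u,v)]$ is nonnegative and bounded by $\lambda+o(h)/h$, so dominated convergence permits interchanging the limit $h\downarrow0$ with the integral. In the shifted term one substitutes $u\mapsto u-c_1h$, $v\mapsto v-c_2h$, which pulls out a factor $e^{(s_1c_1+s_2c_2)h}$ and restricts the domain to $[c_1h,\infty)\times[c_2h,\infty)$; subtracting the two thin boundary strips, whose transforms tend to $c_1hF_2(s_2)$ and $c_2hF_1(s_1)$ as $h\downarrow0$ (here continuity of $\phi$ enters), and expanding $e^{(s_1c_1+s_2c_2)h}=1+(s_1c_1+s_2c_2)h+o(h)$, one gets
\[\int_{\R_+^2}e^{-s_1u-s_2v}D_{(c_1,c_2)}\phi(u,v)\,\D u\,\D v=(s_1c_1+s_2c_2)F(s_1,s_2)-c_1F_2(s_2)-c_2F_1(s_1).\]

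The three double integrals are handled by Tonelli and explicit changes of variables. The convolution term $\iint_{x\le u,\,y\le v}\phi(u-x,v-y)\,\mu(\D x,\D y)$ transforms into $\hat\mu(s_1,s_2)F(s_1,s_2)$. For $\iint_{x\le u,\,y>v}\phi(u-x+r_2(v-y),0)\,\mu(\D x,\D y)$, after exchanging the order of integration I would substitute $w=u-x+r_2(v-y)$ at fixed $v$, using that $\phi(\cdot,0)$ vanishes on $(-\infty,0)$ so that $w$ effectively runs over $[0,\infty)$, and then integrate out $v\in[0,y)$: the inner integral factors as $e^{-s_1x}F_1(s_1)\cdot\bigl(e^{-s_2y}-e^{-r_2s_1y}\bigr)/(r_2s_1-s_2)$, and integration against $\mu$ gives $F_1(s_1)\bigl(\hat\mu(s_1,s_2)-\hat\mu(s_1,r_2s_1)\bigr)/(r_2s_1-s_2)$, which by the $\psi$-identity equals $\tfrac1\lambda F_1(s_1)\bigl(c_2-\tfrac{\psi(s_1,s_2)-\psi(s_1,r_2s_1)}{s_2-r_2s_1}\bigr)$. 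The last double integral is symmetric under $1\leftrightarrow2$ and contributes $\tfrac1\lambda F_2(s_2)\bigl(c_1-\tfrac{\psi(s_1,s_2)-\psi(r_1s_2,s_2)}{s_1-r_1s_2}\bigr)$.

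It remains to collect terms. Multiplying the transformed equation by $\lambda$, the summands $c_1F_2(s_2)$ and $c_2F_1(s_1)$ coming from the directional-derivative term cancel the corresponding summands in the two boundary terms, and substituting $\lambda\hat\mu(s_1,s_2)=\lambda-s_1c_1-s_2c_2+\psi(s_1,s_2)$ collapses the $\lambda F$ and $(s_1c_1+s_2c_2)F$ contributions to $\psi(s_1,s_2)F(s_1,s_2)$; rearranging gives exactly \eqref{eq:kernel}. The only genuinely delicate step is the transform of the directional derivative — justifying the limit--integral interchange via \eqref{eq:bound} and accounting for the two vanishing boundary strips; the changes of variables in the two boundary integrals are routine, provided one tracks the support condition $\phi(\cdot,0)=0$ on $(-\infty,0)$ (which is what truncates the $w$-integration) and notes that the apparent poles at $s_2=r_2s_1$ and $s_1=r_1s_2$ on the right of \eqref{eq:kernel} are removable.
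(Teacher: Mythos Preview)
Your proposal is correct and follows essentially the same route as the paper's proof: transform each term of the integral equation, handle the directional-derivative part via the shift and the two boundary strips (using \eqref{eq:bound} and continuity of $\phi$), compute the three double integrals by Fubini and the substitution $u'=u-x+r_2(v-y)$, and collect using $\lambda\hat\mu=\lambda-s_1c_1-s_2c_2+\psi$. The only cosmetic difference is that the paper transforms \eqref{eq:lh} first and then lets $h\downarrow 0$, whereas you pass to the $D_{(c_1,c_2)}\phi$ equation first and then justify the limit--integral interchange by dominated convergence; these are equivalent.
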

This result identifies the transform of $\phi(u,v)$ up to the unknown functions $F_i(s_i)$.
Equation~\eqref{eq:kernel} resembles the so-called basic adjoint relation in the semimartingale reflected Brownian motion (SRBM) literature, see e.g.~\cite[Eq.~(3.2)]{williams_survey},
with the class of functions $f(u,v)=e^{-s_1u-s_2v}$.
In the queueing literature this type of functional equation is sometimes called `kernel equation', see e.g.~\cite[Ch.\ III.3]{cohen_boxma} and~\cite[Prop.\ 1]{boxma_ivanovs}.

\begin{proof}[Proof of Proposition~\ref{prop:kernel}]
We simply take the transform of~\eqref{eq:lh} and let $h \downarrow 0$.
Let us first consider the transform of $\phi(u+c_1h,v+c_2h)-\phi(u,v)$:
\begin{align*}
  &\int_{\R_+^2} e^{-s_1u-s_2v}\phi(u+c_1h,v+c_2h)\D u\D v-F(s_1,s_2)\\
  &=e^{s_1c_1h+s_2c_2h}\int_{\R_+^2} \1{u>c_1h,v>c_2h}e^{-s_1u-s_2v}\phi(u,v)\D u\D v-F(s_1,s_2)\\
&=(e^{s_1c_1h+s_2c_2h}-1)F(s_1,s_2)-\int_{\R_+^2}\1{u\leq c_1 h\text{ or }v\leq c_2 h} e^{-s_1u-s_2v}\phi(u,v)\D u\D v.
\end{align*}
By dominated convergence and continuity of $\phi$ we have
\begin{align*}
&\lim_{h\downarrow 0}\frac{1}{h}\int_{u\geq 0}\int_{v\leq c_2h} e^{-s_1u-s_2v}\phi(u,v)\D u\D v\\
&=
c_2\int_{u\geq 0}e^{-s_1u}\lim_{h\downarrow 0}\frac{1}{c_2h}\int_{v\leq c_2h} e^{-s_2v}\phi(u,v)\D u\D v=
c_2F_1(s_1).
\end{align*}
Similarly, we have
\[\lim_{h\downarrow 0}\frac{1}{h}\int_{u\leq c_1h}\int_{v\geq 0} e^{-s_1u-s_2v}\phi(u,v)\D u\D v=c_1F_2(s_2)\]
and $\int_{u\leq c_1h}\int_{v\leq c_2h} e^{-s_1u-s_2v}\phi(u,v)\D u\D v=o(h)$.
Combining these we obtain
\begin{equation}\label{eq:p1}
 \lim_{h\downarrow 0}\frac{1}{h}\int_{\R_+^2} (\phi(u+c_1h,v+c_2h)-\phi(u,v)) \D u\D v=(s_1c_1+s_2c_2)F(s_1,s_2)-c_2F_1(s_1)-c_1F_2(s_2).
\end{equation}

It is left to consider the transforms of the integrals in~\eqref{eq:lh}.
Using Fubini's theorem we get
\begin{align*}
 &\int_{\R_+^2} e^{-s_1u-s_2v}\int_{\R_+^2}\1{x\leq u,y\leq v}\phi(u-x,v-y)\mu(\D x,\D y)\D u\D v\\
&=\int_{\R_+^2}\int_{\R_+^2}\1{u\geq x,v\geq y}e^{-s_1u-s_2v}\phi(u-x,v-y)\D u\D v\mu(\D x,\D y)\\
&=\int_{\R_+^2}\int_{\R_+^2}e^{-s_1(u+x)-s_2(v+y)}\phi(u,v)\D u\D v\mu(\D x,\D y)=\hat G(s_1,s_2)F(s_1,s_2),
\end{align*}
where $\hat G(s_1,s_2)=\int_{\R_+^2}e^{-s_1x-s_2y}\mu(\D x,\D y)$.

Next, using the substitution $u'=u-x+r_2(v-y)$ we obtain
\begin{align*}
 &\int_{\R_+^2} e^{-s_1u-s_2v}\int_{\R_+^2}\1{x\leq u,y>v}\phi(u-x+r_2(v-y),0)\mu(\D x,\D y)\D u\D v\\
&=\int_{\R_+^2}\int_{\R_+^2}\1{u\geq x,v<y}e^{-s_1u-s_2v}\phi(u-x+r_2(v-y),0)\D u\D v\mu(\D x,\D y)\\
&=\int_{\R_+^2}e^{-s_1x-s_1r_2y}\int_{\R_+^2}\1{v<y}e^{-s_1u-(s_2-s_1r_2)v}\phi(u,0)\D u\D v\mu(\D x,\D y)\\
&=\int_{\R_+^2}e^{-s_1x-s_1r_2y}F_1(s_1)\frac{1}{s_2-s_1r_2}(1-e^{-(s_2-s_1r_2)y})\mu(\D x,\D y)\\
&=\frac{F_1(s_1)}{s_2-s_1r_2}(\hat G(s_1,s_1r_2)-\hat G(s_1,s_2)),
\end{align*}
and a similar equation for the last integral in~\eqref{eq:lh}.

Finally, we combine all the terms to get
\begin{align*}
 &0=(s_1c_1+s_2c_2)F(s_1,s_2)-c_1F_2(s_2)-c_2F_1(s_1)\\
&+\lambda\left(-F(s_1,s_2)+\hat G(s_1,s_2)F(s_1,s_2)\right.\\
&\left.+\frac{F_1(s_1)}{s_2-s_1r_2}(\hat G(s_1,s_1r_2)-\hat G(s_1,s_2))+
\frac{F_2(s_2)}{s_1-s_2r_1}(\hat G(s_2r_1,s_2)-\hat G(s_1,s_2))\right).
\end{align*}
Some simple manipulations using~\eqref{eq:transform} show that this equation coincides with~\eqref{eq:kernel} and so we are done.
\end{proof}

The following Corollary for two independent driving processes is immediate.
\begin{cor}\label{cor:kernel}
If $X_1$ and $X_2$ are independent processes given in Remark~\ref{rem:independent} then
\begin{equation}\label{eq:kernel_ind}
(\psi_1(s_1)+\psi_2(s_2))F(s_1,s_2)=\frac{\psi_2(s_2)-\psi_2(r_2s_1)}{s_2-r_2s_1}F_1(s_1)+\frac{\psi_1(s_1)-\psi_1(r_1s_2)}{s_1-r_1s_2}F_2(s_2)
\end{equation}
for all $s_1,s_2>0$.
\end{cor}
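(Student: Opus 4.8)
The statement to prove is Corollary~\ref{cor:kernel}, which specializes the kernel equation~\eqref{eq:kernel} from Proposition~\ref{prop:kernel} to the independent case of Remark~\ref{rem:independent}.

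The plan is to simply substitute the decomposition $\psi(s_1,s_2)=\psi_1(s_1)+\psi_2(s_2)$ from Remark~\ref{rem:independent} directly into the general kernel equation~\eqref{eq:kernel} and compute the resulting coefficient of each boundary transform $F_i$.

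\textbf{Proof.} By Remark~\ref{rem:independent}, under independence we have $\psi(s_1,s_2)=\psi_1(s_1)+\psi_2(s_2)$. The left-hand side of~\eqref{eq:kernel} becomes $(\psi_1(s_1)+\psi_2(s_2))F(s_1,s_2)$ immediately. For the coefficient of $F_1(s_1)$ in~\eqref{eq:kernel}, note that
\[
\psi(s_1,s_2)-\psi(s_1,r_2s_1)=\bigl(\psi_1(s_1)+\psi_2(s_2)\bigr)-\bigl(\psi_1(s_1)+\psi_2(r_2s_1)\bigr)=\psi_2(s_2)-\psi_2(r_2s_1),
\]
so the whole coefficient collapses to $\dfrac{\psi_2(s_2)-\psi_2(r_2s_1)}{s_2-r_2s_1}$. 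Symmetrically, the coefficient of $F_2(s_2)$ satisfies
\[
\psi(s_1,s_2)-\psi(r_1s_2,s_2)=\psi_1(s_1)-\psi_1(r_1s_2),
\]
and hence equals $\dfrac{\psi_1(s_1)-\psi_1(r_1s_2)}{s_1-r_1s_2}$. Substituting these three simplifications into~\eqref{eq:kernel} yields~\eqref{eq:kernel_ind} for all $s_1,s_2>0$, which is what we wanted to show. \qed

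There is essentially no obstacle here; the only point worth a moment's care is that the difference $\psi_i(s_i)$ on the diagonal argument is well-defined and the quotients such as $(\psi_2(s_2)-\psi_2(r_2s_1))/(s_2-r_2s_1)$ are understood in the limiting sense (a first divided difference of the entire function $\psi_2$) when $s_2=r_2s_1$, exactly as in~\eqref{eq:kernel}; no new analytic input beyond Proposition~\ref{prop:kernel} is needed. I would keep the proof to these few lines, since it is a direct corollary.
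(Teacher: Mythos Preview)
Your proof is correct and matches the paper's approach: the corollary is declared ``immediate'' from Proposition~\ref{prop:kernel}, and your substitution of $\psi(s_1,s_2)=\psi_1(s_1)+\psi_2(s_2)$ into~\eqref{eq:kernel} is exactly the intended one-line verification.
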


\section{The net profit condition}
\label{Profit}
Let $\mu_i=\e X_i(1)\in[-\infty,\infty)$ be the average drift of the driving process $X_i$.
In the following we assume that the net profit condition holds
\begin{align}\label{eq:net}
 &\mu_1+r_2\mu_2>0, &\mu_2+r_1\mu_1>0.
\end{align}
The importance of this condition is explained by the following result.
\begin{prop}\label{prop:phi}The following dichotomy is true for $r_i\in (0,\infty)$ with $r_1r_2\geq 1$:
\begin{itemize}
\item if~\eqref{eq:net} holds then $\phi(\infty,0)=\phi(0,\infty)=\phi(\infty,\infty)=1$,
\item otherwise $\phi(u,v)=0$ for all $u,v\in\R_+$.
\end{itemize}
\end{prop}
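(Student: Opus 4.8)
The plan is to prove the two alternatives separately; throughout we may assume $\lambda>0$, since for $\lambda=0$ we have $X_i(t)=c_it$, so \eqref{eq:net} holds and $\phi\equiv1$. Suppose first that \eqref{eq:net} fails, say $\mu_1+r_2\mu_2\le0$ (the case $r_1\mu_1+\mu_2\le0$ is symmetric, using $r_1S_1+S_2$ and $r_1X_1+X_2$). Combining the two lines of~\eqref{eq:def} with $L_1,E_i\ge0$ and $r_1r_2\ge1$ gives, before ruin,
\[
S_1(t)+r_2S_2(t)=u+r_2v+Y(t)+(1-r_1r_2)L_1(t)\le u+r_2v+Y(t),\qquad Y:=X_1+r_2X_2,
\]
and $Y$ is a non-degenerate compound Poisson process with drift satisfying $\e Y(1)=\mu_1+r_2\mu_2\le0$ (possibly $-\infty$). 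Since $S_i\ge0$ before ruin, $Y(t)\ge-u-r_2v$ for every $t<\tau$; but $Y(t)\to-\infty$ a.s.\ if $\e Y(1)<0$, whereas $\liminf_{t}Y(t)=-\infty$ a.s.\ if $\e Y(1)=0$. Hence $\tau<\infty$ a.s., so $\phi(u,v)=0$ for all $u,v$.

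Now suppose \eqref{eq:net} holds; then $\mu_1,\mu_2$ are finite. Interchanging the two companies swaps $r_1\leftrightarrow r_2$ and leaves \eqref{eq:net} invariant, so, $\phi$ being non-decreasing, it suffices to prove $\phi(u,0)\to1$ as $u\to\infty$: this gives $\phi(\infty,0)=\phi(0,\infty)=1$ and then $\phi(\infty,\infty)\ge\phi(\infty,0)=1$. The idea is to exhibit, on an event $A_u$ with $\p(A_u)\to1$, a choice of $(L_1,L_2)$ with $E_1\equiv E_2\equiv0$ satisfying all the requirements of Section~\ref{sec:skorokhod}; by the uniqueness noted there this is then the true solution, so $\tau=\infty$ on $A_u$ and $\phi(u,0)\ge\p(A_u)$. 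We split according to the sign of $\mu_1$. If $\mu_1>0$, take $L_1\equiv0$: then $S_2(\cdot)$ is $X_2(\cdot)$ reflected at $0$ (so $L_2(t)=-\inf_{s\le t}(X_2(s)\wedge0)$, $S_2=X_2+L_2$) and $S_1(t)=u+X_1(t)-r_2L_2(t)$; this is admissible exactly on $A_u=\{u\ge M\}$, where $M:=\sup_{t\ge0}(r_2L_2(t)-X_1(t))$, and one checks $M<\infty$ a.s.\ ($-X_1(t)\to-\infty$ since $\mu_1>0$, while $r_2L_2(t)$ grows sublinearly if $\mu_2\ge0$ and, if $\mu_2<0$, the slope of $r_2L_2(t)-X_1(t)$ is $r_2|\mu_2|-\mu_1=-(\mu_1+r_2\mu_2)<0$ by~\eqref{eq:net}).

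If instead $\mu_1\le0$, then \eqref{eq:net} forces $\mu_2>-r_1\mu_1\ge0$, company~$1$ genuinely needs help, and we reflect company~$1$ at $0$ instead: $L_1(t)=(\sup_{s\le t}(-X_1(s))-u)^+$, $S_1=u+X_1+L_1$, and then set $V(t):=X_2(t)-r_1L_1(t)$, $L_2(t)=-\inf_{s\le t}(V(s)\wedge0)$, $S_2=V+L_2$. All requirements of Section~\ref{sec:skorokhod} hold by construction except that $L_1$ and $L_2$ must never jump at the same instant; this is where \eqref{eq:net} is used. The local time $L_1$ becomes active only after $T_0:=\inf\{t:\sup_{s\le t}(-X_1(s))>u\}$, and $T_0\to\infty$ as $u\to\infty$. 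On $[0,T_0)$ one has $V=X_2$, with drift $\mu_2>0$, while on $[T_0,\infty)$ the drift of $V$ is $\mu_2+r_1\mu_1>0$; hence for $u$ large the running infimum of $V$ over $[0,\infty)$ is attained in $[0,T_0)$ and equals $\inf_{s\ge0}(X_2(s)\wedge0)>-\infty$. Consequently $L_2$ jumps only at claim epochs in $[0,T_0)$ and $L_1$ only at claim epochs in $[T_0,\infty)$, so they never jump simultaneously, and $A_u=\{u\ge U^\ast\}$ for an a.s.\ finite $U^\ast$. Either way $\phi(u,0)\ge\p(A_u)\to1$.

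The routine parts are the "fails" direction and the bookkeeping with~\eqref{eq:def} and the Skorokhod requirements. The main obstacle is the case $\mu_1\le0$ of the "holds" direction: since company~$1$ is loss-making one cannot simply switch off its bail-outs, and one must show that once $u$ is large the (finitely many) early rescues of company~$2$ and the subsequent reflection of company~$1$ do not interfere, i.e.\ that $\inf_{t\ge0}V(t)$ is attained before $T_0$. Turning the several "for $u$ large" statements, which involve a.s.\ finite path functionals of $X_1$ and $X_2$, into a single event $A_u$ with $\p(A_u)\to1$ is the delicate step.
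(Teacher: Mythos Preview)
Your treatment of the failure of~\eqref{eq:net} and of the sub-case $\mu_1>0$ is correct and coincides with the paper's: the combination $S_1+r_2S_2$ handles the ``otherwise'' direction, and taking $L_1\equiv0$ with $L_2$ the reflection of $X_2$ is exactly the bound $\phi(u,0)\ge\p\bigl(u+X_1(t)+r_2\underline X_2(t)\ge0\ \forall t\bigr)$ used in the paper.

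The gap is in the sub-case $\mu_1\le0$. Your $(L_1,L_2,0,0)$ is \emph{not} a Skorokhod solution. By~\eqref{eq:def} with $E_i\equiv0$ one has $S_1=u+X_1-r_2L_2+L_1$, not $u+X_1+L_1$; the term $-r_2L_2$ cannot be dropped. With the correct $S_1$, two requirements break. First, on $[0,T_0)$ your $L_2=-\underline X_2$ is typically strictly positive (any claim of company~2 makes it so), while $u+X_1(t)$ can come arbitrarily close to $0$: whenever a claim of company~1 lands $X_1$ in $[-u,-u+r_2|\underline X_2(t)|)$ one gets $S_1(t)=u+X_1(t)+r_2\underline X_2(t)<0$. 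The probability of such an undershoot does not vanish as $u\to\infty$ (the overshoot of the level $-u+\epsilon$ has a nondegenerate limiting law), so no a.s.\ finite $U^\ast$ can absorb it. Second, your $L_1$ jumps precisely when $u+X_1+L_1=0$, hence at those instants $S_1=-r_2L_2(T_0-)<0$ and the condition ``jumps of $L_1$ lie in $\{S_1=0\}$'' fails. So the uniqueness argument cannot be invoked, and the claimed event $A_u$ does not have probability tending to~$1$.

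The paper avoids this by a different reduction. Rather than proving $\phi(\infty,0)=1$ for \emph{every} model satisfying~\eqref{eq:net} (which forces the awkward $\mu_1\le0$ case), it notes that~\eqref{eq:net} makes at least one $\mu_i$ positive, assumes without loss of generality $\mu_1>0$, and then establishes \emph{both} limits under that single assumption: $\phi(\infty,0)=1$ via the easy bound above, and $\phi(0,\infty)=1$ by a first-passage argument. For the latter, given $\epsilon>0$ pick $u$ with $\phi(u,0)>1-\epsilon/2$, let $T=\inf\{t:X_1(t)-\underline X_1(t)>u\}$ (a.s.\ finite since $\mu_1>0$), and choose $v$ so large that $\p\bigl(v+\underline X_2(T)+r_1\underline X_1(T)\ge0\bigr)>1-\epsilon/2$; on that event company~2 can fund all of company~1's deficits on $[0,T]$ while staying nonnegative and never needing help itself, and at time $T$ the state dominates $(u,0)$, whence $\phi(0,v)>1-\epsilon$. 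This two-step scheme is what replaces your attempted construction in the $\mu_1\le0$ case.
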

\begin{proof}
Equation~\eqref{eq:def} implies that
\[0\leq S_1(t)+r_2 S_2(t)=u+r_2v+X_1(t)+r_2X_2(t)+(1-r_1r_2)L_1(t)+E_1(t)+r_2 E_2(t).\]
Thus if $u+r_2v+X_1(t)+r_2X_2(t)<0$ then necessarily $E_1(t)+r_2 E_2(t)>0$ implying that $\tau\leq t$.
Note that if~\eqref{eq:net} does not hold then at least one of the Cram\'er-Lundberg processes $X_1(t)+r_2X_2(t)$ and $r_1X_1(t)+X_2(t)$
is certain to get ruined,
which shows that $\phi(u,v)=0$.

In the following assume that~\eqref{eq:net} holds, which guarantees that at least one of $\mu_i$ is positive. Without loss of generality we assume that~$\mu_1>0$.
By considering the case that the first company does not need any help we arrive at the bound \[\phi(u,0)\geq \p(\forall t\geq 0:u+X_1(t)+\underline X_2(t)r_2\geq 0).\]
Here $\underline X_2(t)$ is the running infimum of the process $X_2$.
It is well known in the queueing literature that a.s.~$X_1(t)/t\rightarrow \mu_1$ and $\underline X_2(t)/t\rightarrow\min(\mu_2,0)$ as $t\rightarrow \infty$.
Now we see from $\mu_1+r_2\mu_2>0$ that a.s.~$X_1(t)+\underline X_2(t)r_2\rightarrow\infty$ and hence the infimum of this process is finite.
This yields $\phi(\infty,0)=1$.

For any $\epsilon>0$ choose $u$ so large that $\phi(u,0)>1-\epsilon/2$, and consider the first passage time
$T=\inf\{t\geq 0:X_1(t)+\underline X_1(t)>u\}$ of the reflected process above level $u>0$.
In order to guarantee that $\phi(0,v)>1-\epsilon$ we need to choose $v$ so large that $\p(\underline X_2(T)+\underline X_1(T)r_1<-v)<\epsilon/2$,
i.e.\ the second company has enough capital for the first to reach high capital.
This is clearly possible, because $T<\infty$ a.s.
\end{proof}

\subsection{From survival function to measure}\label{sec:measures}
Since $\phi(u,v)\in[0,1]$ is continuous and non-decreasing in both $u,v\geq 0$ it defines a finite (probability) measure $\phi(\D u,\D v)$ on $\R_+^2$,
where $\phi(u,v)=\int_{x\in[0,u]}\int_{y\in[0,v]}\phi(\D x,\D y)$.
By Fubini we have for $s_1,s_2>0$ the following relation
\begin{align*}
 &\hat F(s_1,s_2):=\int_{\R_+^2} e^{-s_1u-s_2v}\phi(\D u,\D v)\\
&=s_1s_2\int_{\R_+^2} \int_{\R_+^2} \1{x\geq u,y\geq v}e^{-s_1x-s_2y}\D x\D y\phi(\D u,\D v)\\
&=s_1s_2\int_{\R_+^2} e^{-s_1x-s_2y} \phi(x,y) \D x\D y \\
=s_1s_2F(s_1,s_2).
\end{align*}
Similarly,
\begin{align*}
&\hat F_1(s):=\int_{0-}^\infty e^{-su}\phi(\D u,0)=s F_1(s),\\
&\hat F_2(s):=\int_{0-}^\infty e^{-sv}\phi(0,\D v)=s F_2(s)
\end{align*}
for $s>0$.
We will refer to $\phi(\D u,0)$ and $\phi(0,\D v)$ as boundary measures.
Note that one can easily rewrite the results of Proposition~\ref{prop:kernel} and Corollary~\ref{cor:kernel} in terms of measure transforms $\hat F,\hat F_1,\hat F_2$.
Finally, observe that
\[\lim_{s_1\rightarrow\infty}\hat F(s_1,s_2)=\hat F_2(s_2), \quad \lim_{s_2\rightarrow\infty}\hat F(s_1,s_2)=\hat F_1(s_1), \quad \lim_{s_1\rightarrow\infty,s_2\rightarrow\infty}\hat F(s_1,s_2)=\phi(0,0)\]
and hence given $F$ we can easily find $F_1,F_2$ and $\phi(0,0)$.

\section{Independent driving processes}
\label{Solution}
Throughout this section we assume that the claim streams are independent, and thus we can focus on the kernel equation in Corollary~\ref{cor:kernel}.
For this case we provide an explicit solution for the bivariate Laplace transform
$F(s_1,s_2)$ of the survival probability $\phi(u,v)$.

Even though the kernel equation is quite different from the kernel equation of a coupled processor model,
the method of analysis from~\cite{boxma_ivanovs}, building upon~\cite{cohen_boxma}, can still be used.
In the following we present a self-contained (apart from a few technical properties which can be found in~\cite{boxma_ivanovs}) application of this method to our risk problem.

We assume that $X_1$ and $X_2$ are two independent Cram\'er-Lundberg processes with exponents $\psi_1(s)$ and $\psi_2(s)$,
and so $\psi(s_1,s_2)=\psi_1(s_1)+\psi_2(s_2)$, see Remark~\ref{rem:independent}.
It is well known that $\psi_i(s)=q$ for $q>0$ has a unique positive solution, call it $\Phi_i(q)$.
Moreover, $\Phi_i(0)=\lim_{q\downarrow 0}\Phi_i(q)$ is a solution of $\psi_i(s)=0$, which is positive when $\mu_i<0$ and $\Phi_i(0)=0$ when $\mu_i\geq 0$.

It turns out that there exists a unique non-decreasing \levy process $Y_i$ (descending ladder time process~\cite[p.\ 170]{kyprianou} corresponding to $X_i$) such that
\begin{equation}\label{eqY}\psi^Y_i(q):=\log\e e^{-q Y_i(1)}=\mu_i^+-\frac{q}{\Phi_i(q)}, q>0,\end{equation}
(here and in the sequel, the positive and negative parts of some entity $c$ are denoted by $c^+:=\max(c,0)$ and $c^-:=-\min(c,0)$).
In our case $Y_i$ is a compound Poisson process (CPP) with no deterministic drift and jumps distributed as $\inf\{t\geq 0:X_i(t)<0\}$,
see also~\cite{boxma_ivanovs}.
Define two two-sided CPP processes $X_L$ and $X_R$ and two constants $p_L$ and $p_R$ as follows:
\begin{align}\label{eq:X_LR}
&X_L(t)=Y_1(r_1t)-Y_2(t), &X_R(t)=Y_1(t)-Y_2(r_2t),\nonumber\\[-8pt]\\[-8pt]
&p_L=\mu^+_2+r_1\mu^+_1, &p_R=\mu^+_1+r_2\mu^+_2,\nonumber
\end{align}
where we assume that $Y_1$ and $Y_2$ are independent.
Note that $p_L,p_R>0$ according to the net profit condition~\eqref{eq:net}.
Observe that the corresponding Laplace exponents are
given by
\begin{align}\label{eq:phi_phi}
&\psi_L(-w):=\log \e e^{-w X_L(1)}=p_L-r_1\frac{w}{\Phi_1(w)}+\frac{w}{\Phi_2(-w)},\\
&\psi_R(-w):=\log \e e^{-w X_R(1)}=p_R-\frac{w}{\Phi_1(w)}+r_2\frac{w}{\Phi_2(-w)},\label{eq:phi_phi2}
\end{align}
where $w \in \ii\R, w\neq0$.

Finally, we introduce the Wiener-Hopf factors for $X_L,p_L$:
\begin{align*}
&\Psi_L^+(w)=\e e^{-w\overline X_L(e_{p_L})}, ~~ \Re(w)\geq0, &\Psi_L
^-(w)=\e e^{-w\underline X_L(e_{p_L})}, ~~ \Re(w)\leq0,
\end{align*}
where $e_{p_L}$ denotes an independent exponential random variable of rate $p_L$ and $\overline X_L,\underline X_L$ are the
running supremum and infimum of $X_L$ respectively. It is well known that
\begin{align}\label{eq:WH}&\Psi_L^+(w)\Psi_L^-(w)=\frac{p_L}{p_L-\psi_L(-w)}, &w\in \ii\R,\end{align}
see e.g.~\cite[Thm.\ 6.16]{kyprianou}.
In a similar way we define the Wiener-Hopf factors for $X_R,p_R$.
We are now ready to formulate our main result.

\begin{theorem}\label{thm:main}
For independent $X_1$ and $X_2$ satisfying~\eqref{eq:net} and $s_i>\Phi_i(0)$ it holds that
  \begin{align*}
(\psi_1(s_1)+\psi_2(s_2))F(s_1,s_2)&=
\frac{p_R'}{s_2-r_2s_1}\frac{\psi_2(s_2)-\psi_2(r_2s_1)}{\psi_1(s_1)+\psi_2(r_2s_1)}\frac{\Psi_L^+(\psi_1(s_1))}{\Psi_R^+(\psi_1(s_1))}\\
&+\frac{p_L'}{s_1-r_1s_2}\frac{\psi_1(s_1)-\psi_1(r_1s_2)}{\psi_2(s_2)+\psi_1(r_1s_2)}\frac{\Psi_R^-(-\psi_2(s_2))}{\Psi_L^-(-\psi_2(s_2))},
\end{align*}
where
\begin{align*}&p_L'=r_1\mu_1+\mu_2^+-r_1r_2\mu_2^-, &p_R'=r_2\mu_2+\mu_1^+-r_1r_2\mu_1^-.\end{align*}
\end{theorem}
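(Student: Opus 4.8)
The plan is to solve the kernel equation \eqref{eq:kernel_ind} by turning it into a boundary value problem on the imaginary axis, exactly as in the coupled-processor analysis of \cite{boxma_ivanovs}. The starting point is the observation that the left-hand side of \eqref{eq:kernel_ind} vanishes precisely on the two-dimensional curve $\{\psi_1(s_1)=-\psi_2(s_2)\}$, which is the ``kernel'' of the equation. I would parametrize this curve by introducing, for $w$ on the imaginary axis, the branches $s_1$ and $s_2$ solving $\psi_1(s_1)=w$ and $\psi_2(s_2)=-w$; concretely, for $\Re(w)\ge 0$ one uses $s_1=\Phi_1(w)$, the analytic extension of the right inverse of $\psi_1$ introduced just before the theorem, and for $\Re(-w)\ge 0$ one uses $s_2=\Phi_2(-w)$. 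Substituting $(s_1,s_2)=(\Phi_1(w),\Phi_2(-w))$ into \eqref{eq:kernel_ind} kills the left-hand side and yields, after multiplying through by $(s_1-r_1s_2)(s_2-r_2s_1)$ and simplifying with \eqref{eq:transform}, a relation of the form
\[
A(w)\,F_1(\Phi_1(w)) = B(w)\,F_2(\Phi_2(-w)), \qquad w\in\ii\R\setminus\{0\},
\]
where $A$ and $B$ are explicit functions built from $\psi_1,\psi_2,\Phi_1,\Phi_2$ and the affine factors $s_1-r_1s_2$, $s_2-r_2s_1$. The key algebraic step is to recognize, using \eqref{eq:phi_phi}–\eqref{eq:phi_phi2}, that the ratio $A(w)/B(w)$ factors as a quotient $g_L(w)/g_R(w)$ where $g_L,g_R$ are (up to elementary rational prefactors) $p_L-\psi_L(-w)$ and $p_R-\psi_R(-w)$; this is where the auxiliary CPPs $X_L,X_R$ and their exponents $\psi_L,\psi_R$ enter.

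Next I would invoke the Wiener–Hopf factorizations \eqref{eq:WH} for $X_L,p_L$ and for $X_R,p_R$. Writing $p_L-\psi_L(-w) = p_L/(\Psi_L^+(w)\Psi_L^-(w))$ and similarly for $R$, the relation above rearranges into
\[
\Psi_L^+(w)\,\Psi_R^-(w)^{-1}\cdot(\text{rational})\cdot \tilde F_1(w)
= \Psi_R^+(w)\,\Psi_L^-(w)^{-1}\cdot(\text{rational})\cdot \tilde F_2(w),
\]
where $\tilde F_i$ collects $F_i(\Phi_i(\pm w))$ together with the affine prefactors. The point of this rearrangement is that the quantity on the left extends analytically and is bounded in the right half-plane $\Re(w)>0$ (because $\Psi_L^+$ is analytic there, $\Psi_R^-$ is nonvanishing there, $\Phi_1$ maps it into $\{s_1>\Phi_1(0)\}$, and $F_1$ is a Laplace transform of a bounded function hence bounded on that half-plane), while the quantity on the right extends analytically and is bounded in the left half-plane $\Re(w)<0$. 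By a Liouville-type argument they together define a single bounded entire function, hence a constant. Evaluating the constant — for instance by letting $w\to\infty$ along the appropriate ray and using that $\Phi_i(w)\sim w/c_i$, that $\Psi^\pm\to$ known limits, and that $s_iF_i(s_i)\to\phi(0,0)$-type boundary values, or alternatively by matching at a convenient finite point such as $w\to 0$ where the net-profit normalization $\phi(\infty,0)=\phi(0,\infty)=1$ from Proposition~\ref{prop:phi} fixes the scale — pins down $F_1$ and $F_2$ explicitly. Substituting these expressions back into \eqref{eq:kernel_ind} gives the stated formula for $(\psi_1(s_1)+\psi_2(s_2))F(s_1,s_2)$, and one checks that the constants $p_L',p_R'$ are exactly the values of the normalizing limits, e.g. $p_L' = \lim_{w\to 0} (\text{left factor})$, which a direct computation identifies as $r_1\mu_1+\mu_2^+-r_1r_2\mu_2^-$.

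I expect two genuine obstacles. The first, and main, one is establishing the analyticity-and-boundedness claims that justify the Liouville step: one must show that $F_1(\Phi_1(w))$, multiplied by its affine and Wiener–Hopf prefactors, has no poles in the open right half-plane and does not blow up at the boundary or at infinity. Potential poles come from the affine factors $s_1-r_1s_2$ and from zeros of $\psi_1(s_1)+\psi_2(r_2s_1)$ (the denominators in the final formula), and one has to argue these are cancelled — this is precisely the kind of delicate zero-counting done in \cite{boxma_ivanovs}, which I would import as the ``few technical properties'' the authors allude to. The second obstacle is bookkeeping: correctly tracking which branch ($\Phi_i(w)$ versus $\Phi_i(-w)$, $\Psi^+$ versus $\Psi^-$) lives in which half-plane, and verifying that the rational prefactors from $s_2-r_2s_1$ on the kernel curve are exactly the factors $\frac{\psi_2(s_2)-\psi_2(r_2s_1)}{\psi_1(s_1)+\psi_2(r_2s_1)}$ appearing in the statement. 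Neither is conceptually hard once the factorization $A/B = g_L/g_R$ is in hand, but both require care, and the identification of $p_L',p_R'$ as the precise normalizing constants is the place where a sign or a $\mu^+$ versus $\mu$ slip is easiest to make.
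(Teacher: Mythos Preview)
Your proposal is correct and follows essentially the same route as the paper: substitute $(s_1,s_2)=(\Phi_1(w),\Phi_2(-w))$ into the kernel equation, rewrite the resulting identity using the exponents $\psi_L,\psi_R$ via \eqref{eq:phi_phi}--\eqref{eq:phi_phi2}, apply the Wiener--Hopf factorizations to separate the two sides into functions analytic and bounded in complementary half-planes, invoke Liouville, and identify the constant via $w\to 0$ using $\phi(\infty,0)=\phi(0,\infty)=1$. One small correction: your description of the first obstacle is slightly misaimed. In the paper's version of the entire-function identity the relevant quantity on the right half-plane side is $\frac{\Psi_R^+(w)}{\Psi_L^+(w)}\cdot\frac{w+\psi_2(r_2\Phi_1(w))}{\Phi_1(w)}\cdot\hat F_1(\Phi_1(w))$, and the issue is not cancellation of poles from $\psi_1(s_1)+\psi_2(r_2s_1)$ but rather boundedness at infinity of the middle factor (handled via $\psi_i(s)/s\to c_i$ and $w/\Phi_1(w)\to c_1$) together with boundedness of the Wiener--Hopf ratio (imported from \cite{boxma_ivanovs}).
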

We note that the cases $s_2 = r_2 s_1$ and $s_1 = r_1 s_2$ should be understood in the limiting sense.
Importantly, numerical evaluation of the survival probability function $\phi(u,v)$ using
Theorem~\ref{thm:main} is feasible.
We refer an interested reader to~\cite{denIseger2,denIsiger}
for a discussion of an efficient and accurate numerical inversion of a two-dimensional Laplace transform and computation of the Wiener-Hopf factors, respectively. 

\begin{proof}[Proof of Theorem~\ref{thm:main}]
We rewrite~\eqref{eq:kernel_ind} using measure transforms from Section~\ref{sec:measures}:
\begin{align*}
 &(s_1-r_1s_2)(s_2-r_2s_1)(\psi_1(s_1)+\psi_2(s_2))\hat F(s_1,s_2)\\
&=(s_1-r_1s_2)(\psi_2(s_2)-\psi_2(r_2s_1))\hat F_1(s_1)s_2\\
&+(s_2-r_2s_1)(\psi_1(s_1)-\psi_1(r_1s_2))\hat F_2(s_2)s_1.
\end{align*}
By analytic continuation and continuity on the imaginary axis we see that this equation holds for all $s_1,s_2$ with $\Re(s_1),\Re(s_2)\geq 0$.
It is shown in~\cite{boxma_ivanovs} that $\Phi_i(s)$ can be analytically continued to $\Re(s)\geq 0$.
Moreover, in this domain $\Re(\Phi_i(s))>0$ for $\Re(s)>0$, $\Phi_i(s)\neq 0$ for $s\neq 0$, and the identity $\psi_i(\Phi_i(s))=s$ is preserved.
Thus we can plug in the above equation $s_1=\Phi_1(w),s_2=\Phi_2(-w)$ for $w\in i\mathbb R$ to obtain the following:
\begin{align*}
0&=(\Phi_1(w)-r_1\Phi_2(-w))(-w-\psi_2(r_2\Phi_1(w)))\hat F_1(\Phi_1(w))\Phi_2(-w)\\
&+(\Phi_2(-w)-r_2\Phi_1(w))(w-\psi_1(r_1\Phi_2(-w)))\hat F_2(\Phi_2(-w))\Phi_1(w).
\end{align*}
Assume for a moment that $w\neq 0$ and multiply both sides by $w/(\Phi_1(w)\Phi_2(-w))^2$ which using~\eqref{eq:phi_phi} leads to
\begin{align*}
&(p_L-\psi_L(-w))\frac{-w-\psi_2(r_2\Phi_1(w))}{\Phi_1(w)}\hat F_1(\Phi_1(w))=\\
&(p_R-\psi_R(-w))\frac{w-\psi_1(r_1\Phi_2(-w))}{\Phi_2(-w)}\hat F_2(\Phi_2(-w)).
\end{align*}
Using Wiener-Hopf factorization we arrive at
\begin{align}\label{eq:entire}
&p_L\frac{\Psi_R^+(w)}{\Psi_L^+(w)}\frac{w+\psi_2(r_2\Phi_1(w))}{\Phi_1(w)}\hat F_1(\Phi_1(w))=\\
&p_R\frac{\Psi_L^-(w)}{\Psi_R^-(w)}\frac{-w+\psi_1(r_1\Phi_2(-w))}{\Phi_2(-w)}\hat F_2(\Phi_2(-w)).\nonumber
\end{align}

Consider~\eqref{eq:entire} and note that the lhs is analytic in $\Re(w)>0$, the rhs is analytic in $\Re(w)<0$, and both are continuous and coincide on the imaginary axis;
for $w=0$ this is checked by taking the limit in the respective half plane, see also~\eqref{eq:limit} below.
Hence this equation defines an entire function.
Let us show that it is bounded and hence is a constant, call it $C$, by Liouville's theorem.
According to~\cite{boxma_ivanovs} the ratios of W-H factors are bounded in their respective half planes.
The transforms $\hat F_i$ of boundary measures are bounded by~1.
Finally we consider $(w+\psi_2(r_2\Phi_1(w)))/\Phi_1(w)$ for $\Re(w)\geq 0$; the corresponding term on the rhs can be analyzed in the same way.
Boundedness follows from the following simple observations:
$|\Phi_1(w)|\rightarrow\infty,\psi_2(w)/w\rightarrow c_2,\psi_1(\Phi_1(w))/\Phi_1(w)=w/\Phi_1(w)\rightarrow c_1$ as $|w|\rightarrow\infty$.

Both sides of \eqref{eq:entire} are equal to some constant~$C$, which can be identified by taking the limit $w\rightarrow 0$.
Suppose $\mu_1\geq 0$ then $\Phi_1(0)=0$ and hence we have
\begin{equation}\label{eq:limit}
 \lim_{|w|\downarrow 0,\Re(w)\geq 0}\frac{w+\psi_2(r_2\Phi_1(w))}{\Phi_1(w)}=\psi_1'(0)+r_2\psi_2'(0)=\mu_1+r_2\mu_2.
\end{equation}
This results in
\[C=p_L(\mu_1+r_2\mu_2)\phi(\infty,0),\]
and for $\mu_2\geq 0$ we similarly have
\[C=p_R(\mu_2+r_1\mu_1)\phi(0,\infty).\]
Recall that under the net profit condition~\eqref{eq:net}, we have $\phi(\infty,0)=\phi(0,\infty)=1$ and at least one of $\mu_1,\mu_2$ should be positive.
Now it is not hard to check that $C/p_L=p_R', C/p_R=p_L'$.

Finally, pick $s_i> \Phi_i(0)$ which implies $\Phi_i(\psi_i(s_i))=s_i$.
By considering the left part of~\eqref{eq:entire} for $w=\psi_1(s_1)>0$ and its right part for $w=-\psi_2(s_2)<0$ we obtain
\begin{align*}
 F_1(s_1)&=\frac{C}{p_L(\psi_1(s_1)+\psi_2(r_2s_1))}\frac{\Psi_L^+(\psi_1(s_1))}{\Psi_R^+(\psi_1(s_1))},\\
 F_2(s_2)&=\frac{C}{p_R(\psi_2(s_2)+\psi_1(r_1s_2))}\frac{\Psi_R^-(-\psi_2(s_2))}{\Psi_L^-(-\psi_2(s_2))}.
\end{align*}
Combining these calculations with Corollary~\ref{cor:kernel} we get the result.
\end{proof}

\begin{cor}\label{cor:zero_init}
 For independent $X_1$ and $X_2$ satisfying~\eqref{eq:net} it holds that
\[\phi(0,0)=\frac{p_R'}{c_1+r_2c_2}\frac{\Psi_L^+(\infty)}{\Psi_R^+(\infty)}=\frac{p_L'}{c_1+r_2c_2}\frac{\Psi_R^-(-\infty)}{\Psi_L^-(-\infty)}.\]
\end{cor}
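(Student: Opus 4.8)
The plan is to read off $\phi(0,0)$ as a one-dimensional limit of the two boundary transforms $F_1,F_2$ that are already identified inside the proof of Theorem~\ref{thm:main}. The starting point is the observation from Section~\ref{sec:measures}: $\hat F_i(s)=sF_i(s)$ and $\hat F_i(s)\to\phi(0,0)$ as $s\to\infty$. (If one prefers a direct argument, $sF_1(s)=\int_0^\infty e^{-t}\phi(t/s,0)\,\D t\to\phi(0,0)$ by dominated convergence and continuity of $\phi$, and likewise for $sF_2(s)$.) So I would first substitute into these limits the closed forms
\begin{align*}
F_1(s_1)&=\frac{C}{p_L\bigl(\psi_1(s_1)+\psi_2(r_2s_1)\bigr)}\,\frac{\Psi_L^+(\psi_1(s_1))}{\Psi_R^+(\psi_1(s_1))},\\
F_2(s_2)&=\frac{C}{p_R\bigl(\psi_2(s_2)+\psi_1(r_1s_2)\bigr)}\,\frac{\Psi_R^-(-\psi_2(s_2))}{\Psi_L^-(-\psi_2(s_2))}
\end{align*}
obtained there, together with the identifications $C/p_L=p_R'$ and $C/p_R=p_L'$ established in the same proof.

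Next I would pass to the limit $s\to\infty$. From~\eqref{eq:transform} and $\int_{\R_+}e^{-asx}\mu_i(\D x)\in[0,1]$, the claim term of $\psi_i(as)$ stays bounded, so $\psi_i(as)/s\to ac_i$; hence $\psi_1(s_1)+\psi_2(r_2s_1)\sim(c_1+r_2c_2)s_1$ (and similarly for the other denominator), while $\psi_1(s_1)\to\infty$. Each Wiener--Hopf factor is bounded by $1$ on its half-plane and, along the real axis, converges: $\Psi_L^+(w)=\e e^{-w\overline X_L(e_{p_L})}\to\p(\overline X_L(e_{p_L})=0)=:\Psi_L^+(\infty)$ by bounded convergence, and similarly for $\Psi_R^+,\Psi_L^-,\Psi_R^-$. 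Combining these facts,
\[
sF_1(s)\longrightarrow\frac{C}{p_L(c_1+r_2c_2)}\,\frac{\Psi_L^+(\infty)}{\Psi_R^+(\infty)}=\frac{p_R'}{c_1+r_2c_2}\,\frac{\Psi_L^+(\infty)}{\Psi_R^+(\infty)},
\]
which is the first expression, and the second follows verbatim from $sF_2(s)$ using $C/p_R=p_L'$ and the limits $\Psi_R^-(-\infty),\Psi_L^-(-\infty)$; equality of the two is automatic, both being equal to $\phi(0,0)$.

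I do not expect a genuine obstacle, as this is essentially a limiting corollary of Theorem~\ref{thm:main}. The two points needing a careful word are: (i) the passage $sF_i(s)\to\phi(0,0)$, which rests on the already-established continuity of $\phi$; and (ii) the existence \emph{and strict positivity} of the limits $\Psi_L^+(\infty),\Psi_R^+(\infty)$ (resp.\ $\Psi_R^-(-\infty),\Psi_L^-(-\infty)$) along the real axis, so that the displayed ratios are meaningful. For the latter it suffices to note, via the probabilistic representation of the factors and bounded convergence, that e.g.\ $\Psi_L^+(\infty)=\p(\overline X_L(e_{p_L})=0)>0$: indeed $X_L(t)=Y_1(r_1t)-Y_2(t)$ is non-positive on any initial time interval on which the compound Poisson process $Y_1$ has not jumped (then $Y_1(r_1t)=0$ and $Y_2(t)\geq0$), an event of positive probability, and analogously for the remaining three factors.
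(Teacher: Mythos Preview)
Your argument is correct and essentially coincides with the paper's proof: there one rewrites Theorem~\ref{thm:main} as a formula for $\hat F(s_1,s_2)$ and then sends $s_2\to\infty$ followed by $s_1\to\infty$ (respectively in the opposite order), whereas you start directly from the explicit $F_1,F_2$ obtained inside that proof and let $s\to\infty$ in $sF_i(s)$; since the paper's first limit recovers precisely $\hat F_i(s_i)=s_iF_i(s_i)$, the two routes are the same computation, yours being slightly more direct. Your additional remark on the strict positivity of $\Psi_L^+(\infty),\Psi_R^+(\infty),\Psi_L^-(-\infty),\Psi_R^-(-\infty)$ is a welcome clarification not spelled out in the paper.
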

\begin{proof}
First observe that $\Psi_L^+(\infty)=\p(\overline X_L(e_{p_L})=0)$ and similar observations hold true for the other W-H factors.
 From Theorem~\ref{thm:main} we see that
  \begin{align*}
\hat F(s_1,s_2)&=
\frac{p_R's_2}{s_2-r_2s_1}\left(\frac{s_1}{\psi_1(s_1)+\psi_2(r_2s_1)}-\frac{s_1}{\psi_1(s_1)+\psi_2(s_2)}\right)\frac{\Psi_L^+(\psi_1(s_1))}{\Psi_R^+(\psi_1(s_1))}\\
&+\frac{p_L's_1}{s_1-r_1s_2}\left(\frac{s_2}{\psi_2(s_2)+\psi_1(r_1s_2)}-\frac{s_2}{\psi_1(s_1)+\psi_2(s_2)}\right)\frac{\Psi_R^-(-\psi_2(s_2))}{\Psi_L^-(-\psi_2(s_2))}.
\end{align*}
To get the first statement, let $s_2\rightarrow\infty$ and then $s_1\rightarrow\infty$.
The second is obtained by reversing the order of limit operations.
\end{proof}

\subsection{Example: contingent surplus note}
Let us provide a check for a simple particular case, which admits a direct analysis.
Suppose that $X_2(t)=c_2 t$, i.e.~the second company is, in fact, a surplus note, which earns a constant premium and provides a cover for the first company.
We also assume that the net profit condition~\eqref{eq:net} holds.

Note that our system survives if and only if the one dimensional Cram\'er-Lundberg process $Z(t):=X_1(t)+(\mu_2/r_1) t$ survives when started at $u+v/r_1$.
Hence, with $\phi_Z(x)$ the survival probability for the $Z$ process when started at $x$,
\begin{align}
&F(s_1,s_2)=\int_{\R_+^2}e^{-s_1u-s_2v}\phi_Z(u+v/r_1)\D u\D v\nonumber\\
&=\int_0^\infty e^{-s_1 u}\phi_Z(u)\int_0^\infty \1{u>v/r_1}e^{-(s_2-s_1/r_1)v}\D v\D u\nonumber\\
&=\frac{1}{s_2-s_1/r_1}\int_0^\infty (e^{-s_1u}-e^{-s_2r_1u})\phi_Z(u)\D u\nonumber\\
&=\frac{\mu_1+\mu_2/r_1}{s_2-s_1/r_1}\left(\frac{1}{\psi_Z(s_1)}-\frac{1}{\psi_Z(r_1s_2)}\right)\nonumber\\
&=\frac{\mu_2+\mu_1r_1}{s_1-r_1s_2}\left(\frac{1}{\psi_1(r_1s_2)+\mu_2s_2}-\frac{r_1}{r_1\psi_1(s_1)+\mu_2s_1}\right),\label{eq:tocompare}
\end{align}
where $\psi_Z$ is the Laplace exponent of~$Z$.

Let us check this result against Theorem~\ref{thm:main}.
Note that $\mu_2=c_2>0$, $\psi_2(s_2)=\mu_2 s_2,\Phi_2(q)=q/\mu_2$ and so $\psi^Y_2(q)=0$ implying that $Y_2$ is a zero process.
Thus $\Psi^-_L(w)=\Psi^-_R(w)=1,$
\[\Psi^+_L(w)=\e e^{-w Y_1(r_1 e_{p_L})}=\e e^{\psi_1^Y(w)r_1e_{p_L}}=\frac{p_L}{p_L-r_1\psi_1^Y(w)}=\frac{p_L}{\mu_2+r_1w/\Phi_1(w)}\]
and similarly $\Psi^+_R(w)=p_R/(r_2\mu_2+w/\Phi_1(w))$.
Hence for $s_1,s_2$ large enough we have
\[\frac{\Psi_L^+(\psi_1(s_1))}{\Psi_R^+(\psi_1(s_1))}=\frac{p_L}{p_R}\frac{r_2\mu_2s_1+\psi_1(s_1)}{\mu_2s_1+r_1\psi_1(s_1)}.\]
According to Theorem~\ref{thm:main} we can write
\begin{align*}
(\psi_1(s_1)+\mu_2s_2)F(s_1,s_2)=
\frac{\mu_2p_R'}{\mu_2s_1+r_1\psi_1(s_1)}\frac{p_L}{p_R}
&+\frac{r_1\mu_1+\mu_2}{s_1-r_1s_2}\frac{\psi_1(s_1)-\psi_1(r_1s_2)}{\mu_2s_2+\psi_1(r_1s_2)},
\end{align*}
which indeed coincides with~\eqref{eq:tocompare}, because $p_R'p_L/p_R=r_1\mu_1+\mu_2$.
The latter is established by checking the two cases $\mu_1\geq 0$ and $\mu_1<0$ separately.

\subsection{Example: $r_1r_2=1$}\label{sec:nocost}
Assume that $r_1r_2=1$ which in particular holds when there are no transaction costs, i.e.\ $r_1=r_2=1$.
The net profit condition reduces to a single inequality $\mu_1+r_2\mu_2>0$.
From~\eqref{eq:def} it follows that
\[S_1(t)+r_2S_2(t)=u+r_2v+X_1(t)+r_2X_2(t)+E_1(t)+r_2E_2(t).\]
Observe that the ruin time $\tau$ is the first jump of $E_1(t)+r_2E_2(t)$.
Hence we have $u+r_2v+X_1(\tau)+r_2X_2(\tau)<0$ and $u+r_2v+X_1(t)+r_2X_2(t)\geq 0$ for all $t<\tau$.
Therefore, $\tau$ is the ruin time of the classical Cram\'er-Lundberg process $Z(t)=X_1(t)+r_2X_2(t)$ started in $u+r_2v=u+v/r_1$.
Similar to~\eqref{eq:tocompare} we get
\begin{align}\label{eq:nocosts}
&F(s_1,s_2)=\int_{\R_+}e^{-s_1u-s_2v}\phi_Z(u+v/r_1)\D u\D v\nonumber\\
&=\frac{\mu_1+\mu_2/r_1}{s_2-s_1/r_1}\left(\frac{1}{\psi_Z(s_1)}-\frac{1}{\psi_Z(r_1s_2)}\right)\nonumber\\
&=\frac{\mu_1+r_2\mu_2}{s_2-s_1r_2}\left(\frac{1}{\psi_1(s_1)+\psi_2(r_2s_1)}-\frac{1}{\psi_1(r_1s_2)+\psi_2(s_2)}\right).
\end{align}

Let us now check this result against Theorem~\ref{thm:main}.
Firstly, note that $p_R=r_2p_L$ and $X_R(t)=X_L(r_2t)$, which implies that the Wiener-Hopf factors for both processes are the same:
$\Psi_L^+(w)=\Psi_R^+(w)$ and
$\Psi_L^-(w)=\Psi_R^-(w)$.
Hence we have
\begin{align*}
&(\psi_1(s_1)+\psi_2(s_2))F(s_1,s_2)\\
&=\frac{p_R'}{s_2-r_2s_1}\frac{\psi_2(s_2)-\psi_2(r_2s_1)}{\psi_1(s_1)+\psi_2(r_2s_1)}
+\frac{p_L'}{s_1-r_1s_2}\frac{\psi_1(s_1)-\psi_1(r_1s_2)}{\psi_2(s_2)+\psi_1(r_1s_2)}\\
&=\frac{p_R'}{s_2-r_2s_1}\frac{\psi_1(s_1)+\psi_2(s_2)}{\psi_1(s_1)+\psi_2(r_2s_1)}
+\frac{p_L'}{s_1-r_1s_2}\frac{\psi_1(s_1)+\psi_2(s_2)}{\psi_2(s_2)+\psi_1(r_1s_2)}
-\frac{p_R'}{s_2-r_2s_1}-\frac{p_L'}{s_1-r_1s_2}.
\end{align*}
Note that $p_R'=r_2p_L'=\mu_1+r_2\mu_2$ and so we indeed obtain~\eqref{eq:nocosts}.

\subsection{Numerical experiments}
In this section we provide a simple numerical illustration of our main result, Theorem~\ref{thm:main}, by computing 
\[\hat F(s_1,s_2)=s_1s_2F(s_1,s_2)=\e\phi(e_{s_1},e_{s_2}),\]
i.e.\ the survival probability for initial capitals $e_{s_1},e_{s_2}$, which are two independent exponential random variables of rates $s_1$ and $s_2$ respectively.   
The main difficulty lies in evaluating the Wiener-Hopf factors. This task can be accomplished using various approaches, see e.g.~\cite{denIsiger} presenting an algorithm based on the Spitzer identity and Laplace inversion technique. 
In this study, however, we use a Monte Carlo simulation algorithm to obtain Wiener-Hopf factors, which is relatively straightforward to implement since the processes $X_L$ and $X_R$ are CPPs. These two processes are constructed from the inverse local time processes $Y_1$ and $Y_2$, which we discuss in the following.

Consider any of $Y_1,Y_2$ and for the moment drop the index to simplify notation.
From the general theory of local times, see~\cite[Ch.\ 6.2]{kyprianou}, we know that $Y$ is a CPP with jump distribution $\p_0(\tau_0^-\in\D x|\tau_0^-<\infty)$, where $\tau_0=\inf\{t\geq 0:X(t)<0\}$. Let us determine the jump rate $\lambda_Y$ of $Y$ assuming that $\psi'(0)=\mu>0$:
\begin{align*}
\psi^Y(q)&=\lambda_Y(\e(e^{-q\tau_0^-}|\tau_0^-<\infty)-1)=
\lambda_Y\left[\left(1-\frac{q}{\Phi(q)c}\right)/\left(1-\frac{\psi'(0)}{c}\right)-1\right]\\
&=\lambda_Y\frac{\Phi(q)\mu-q}{\Phi(q)(c-\mu)}=\frac{\lambda_Y}{c-\mu}\left(\mu-\frac{q}{\Phi(q)}\right),
\end{align*}
where we used the exit identity~\cite[(8.6)]{kyprianou} and the fact that $W^{(q)}(0)=W^{(0)}(0)=1/c$ in that formula. Comparing to~\eqref{eqY} we see that $\lambda_Y=c-\mu$. This allows to simulate the process $Y$ by drawing from $\p_0(\tau_0^-\in\D x|\tau_0^-<\infty)$, which in turn requires to simulate the original process and to reject the paths with `long survival'.

In our experiment we take deterministic claim sizes $C_1=C_2=1$, and put $c_1=c_2=1,\lambda_1=0.5,\lambda_2=0.9$ and $r_1=r_2=1.1$, which ensures that $\mu_1,\mu_2>0$ and yields $\psi_i(s)=s-\lambda_i(1-e^{-s})$.
First we simulate $N=10,000$ realizations of $\overline X_L(e_{p_L}),\overline X_R(e_{p_R}),\underline X_L(e_{p_L}),\underline X_R(e_{p_R})$ and then assign values to the Wiener-Hopf factors in an obvious way, e.g.\ $\Psi^+_L(w)=\sum_i \exp\left(-w \overline X_L^i(e_{p_L}\right)/N$, see Figure~\ref{fig:WH}.
\begin{figure}[h!]
\centering
\caption{Simulated Wiener-Hopf factors $\Psi_R^+(w),\Psi_L^+(w),\Psi_L^-(-w),\Psi_R^-(-w)$ (from top to bottom).}
\includegraphics[width=0.5\textwidth]{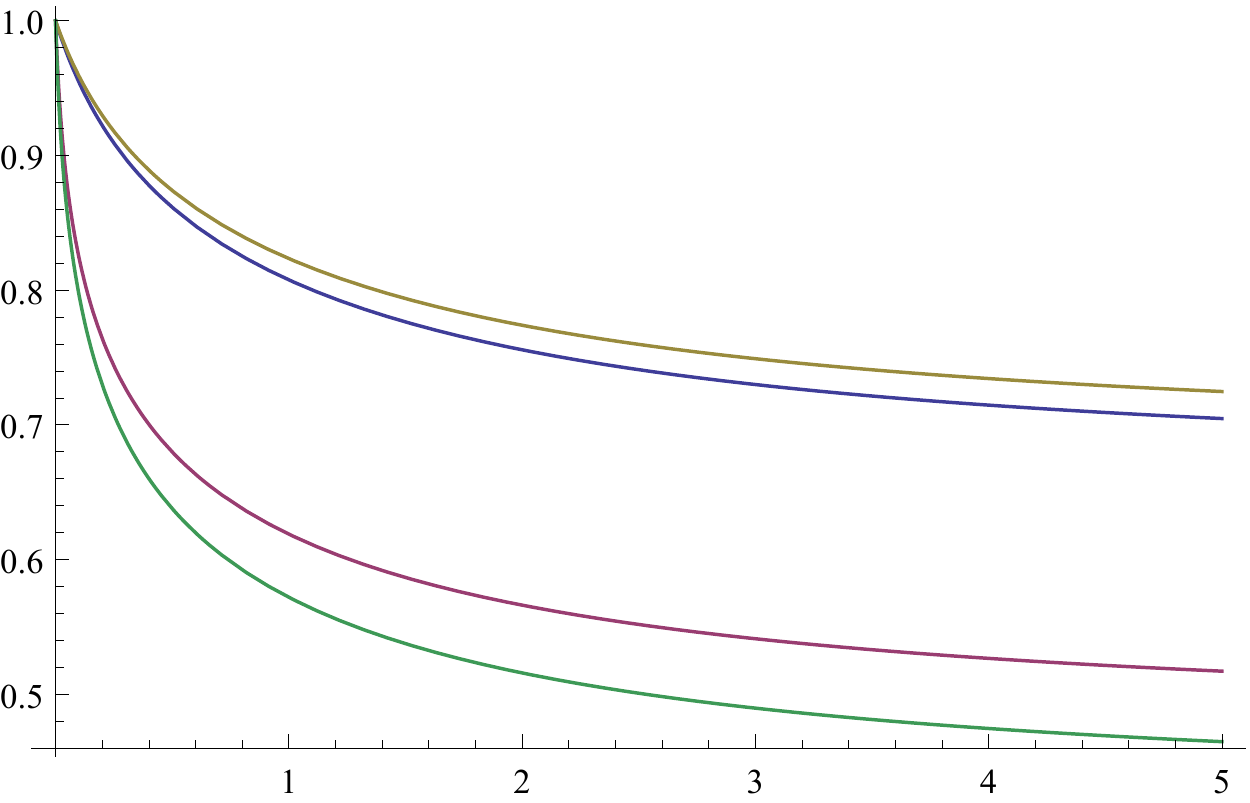}
\label{fig:WH}
\end{figure}
Now we can use Theorem~\ref{thm:main} to plot $\e\phi(e_{s_1},e_{s_2})$,
see Figure~\ref{fig:numerics} where we fix $s_2=1$ and let $s_1\in[1,10]$. 
Note that the survival probability is a decreasing function of $s_1$ and thus it is increasing as a function of the mean initial capital $\e e_{s_1}=1/s_1$.
\begin{figure}[h!]
\centering
\caption{Probability of survival $\e\phi(e_{s_1},e_{s_2})$ for exponential initial capitals as a function of $s_1$ with $s_2=1$. Dots represent direct simulation of the survival probability not using Theorem~\ref{thm:main}.}
\includegraphics[width=0.5\textwidth]{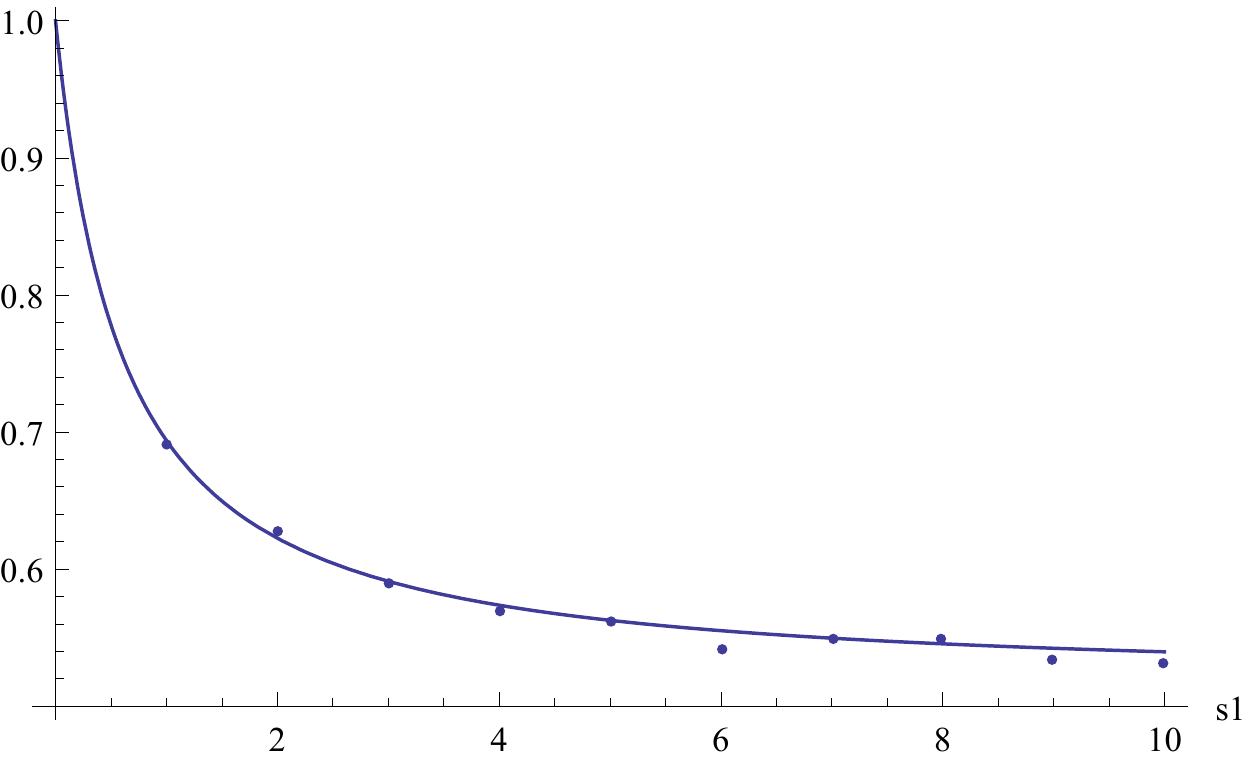}
\label{fig:numerics}
\end{figure}
Finally, we report $\phi(0,0)$, the survival probability from zero initial capitals. Corollary~\ref{cor:zero_init} yields $0.281$ and $0.277$ using positive and negative Wiener-Hopf factors respectively; the direct simulation gives $0.279$.

\section{Making some capital transfers impossible}\label{sec:restricted}
 One can modify our risk model so that {\em only the second company is allowed to help the first one, but not the opposite way}.
  In other words, we regard the second company as an insurer of the first (against deficit) with its own stream of claims.
Note that this new model is governed by the equations
 \begin{align*}
S_1(t)=u+X_1(t)+L_1(t)+E_1(t),\nonumber\\
S_2(t)=v+X_2(t)-r_1L_1(t)+E_2(t).
\end{align*}
In the following we specialize our results to this case by taking limits as $r_2\rightarrow\infty$.
The survival probability we thus obtain corresponds to survival of both insurer and reinsurer.

Firstly, the kernel equation becomes
\begin{align*}
\psi(s_1,s_2)F(s_1,s_2)=c_2F_1(s_1)+\frac{\psi(s_1,s_2)-\psi(r_1s_2,s_2)}{s_1-r_1s_2}F_2(s_2),
\end{align*}
which is immediate from Proposition~\ref{prop:kernel} and \eqref{eq:transform}.
Secondly, the net profit condition now reads
\begin{align}\label{eq:net2}
&\mu_2>0, &\mu_2+r_1\mu_1>0,
\end{align}
which is seen by repeating the steps of the proof of Proposition~\ref{prop:phi}.
Assuming independence of claim streams, we obtain the following corollary of Theorem~\ref{thm:main}.
\begin{cor}\label{cor:restricted}
Assume that $r_2=\infty$ and $X_1,X_2$ are independent and satisfy~\eqref{eq:net2}.
Then for $s_i>\Phi_i(0)$ it holds that
  \begin{align*}
&(\psi_1(s_1)+\psi_2(s_2))F(s_1,s_2)=(\mu_2-r_1\mu_1^-)\frac{\Psi_L^+(\psi_1(s_1))}{s_1}\\
&+\mu_2(\mu_2+r_1\mu_1)\frac{s_2}{s_1-r_1s_2}\frac{\psi_1(s_1)-\psi_1(r_1s_2)}{\psi_2(s_2)+\psi_1(r_1s_2)}\frac{1}{\psi_2(s_2)\Psi_L^-(-\psi_2(s_2))}.
\end{align*}
\end{cor}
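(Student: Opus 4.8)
The plan is to obtain Corollary~\ref{cor:restricted} as the $r_2\to\infty$ limit of Theorem~\ref{thm:main}, as announced before the statement. Two things have to be done: (i) justify that the left-hand side converges, i.e.\ that the transform $F(s_1,s_2)$ of the cost-$r_2$ model tends to that of the restricted model; and (ii) compute the limit of the right-hand side of Theorem~\ref{thm:main} piece by piece. For (i) I would couple the models across $r_2$ by common claim arrivals and claim sizes and argue on sample paths, using the recursive construction of Section~\ref{sec:recursive}: on any path on which the restricted model survives, company~$2$ never goes below~$0$, so the transfer $L_2$ is never triggered and all the finite-$r_2$ models have exactly the same trajectory and survive as well; on any path on which the restricted model is ruined, at the decisive claim epoch the transfer required to save company~$2$ (amount $\times r_2$) becomes unaffordable for company~$1$ once $r_2$ is large enough, so the finite-$r_2$ model is ruined by the same time. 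Hence $\phi^{(r_2)}(u,v)\to\phi^{(\infty)}(u,v)$ pointwise, and bounded convergence (integrand $\le 1$, $e^{-s_1u-s_2v}$ integrable for $s_1,s_2>0$) gives $F^{(r_2)}(s_1,s_2)\to F^{(\infty)}(s_1,s_2)$. Note also that under~\eqref{eq:net2} the net profit condition~\eqref{eq:net} of the cost-$r_2$ model holds for all large $r_2$, so Theorem~\ref{thm:main} is applicable along the limit.

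Next I would record which objects on the right-hand side of Theorem~\ref{thm:main} do not depend on $r_2$: the process $X_L$ with $X_L(t)=Y_1(r_1t)-Y_2(t)$, the constant $p_L$, the factors $\Psi_L^{\pm}$, and --- since $\mu_2>0$ under~\eqref{eq:net2}, so $\mu_2^-=0$ --- also $p_L'=r_1\mu_1+\mu_2$; moreover the quotient $\tfrac{\psi_1(s_1)-\psi_1(r_1s_2)}{\psi_2(s_2)+\psi_1(r_1s_2)}$ and the factor $\tfrac{1}{s_1-r_1s_2}$ are $r_2$-free. The remaining scalars behave as follows: $p_R'=r_2(\mu_2-r_1\mu_1^-)+\mu_1^+$ and $p_R=\mu_1^++r_2\mu_2$ grow linearly in $r_2$ (and $\mu_2-r_1\mu_1^-\ge 0$ under~\eqref{eq:net2}); since $\psi_2(r_2s_1)\to\infty$ linearly, dividing numerator and denominator by $r_2$ gives $\tfrac{\psi_2(s_2)-\psi_2(r_2s_1)}{\psi_1(s_1)+\psi_2(r_2s_1)}\to-1$ and $\tfrac{p_R'}{s_2-r_2s_1}\to-\tfrac{\mu_2-r_1\mu_1^-}{s_1}$.

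The substantive step is the asymptotics of the Wiener-Hopf factors $\Psi_R^{\pm}$. Since $X_R(t)=Y_1(t)-Y_2(r_2t)$ with $Y_2\ge 0$, we have $0\le \overline X_R(e_{p_R})\le Y_1(e_{p_R})$; as $p_R\to\infty$ the variable $e_{p_R}$ collapses to $0$ while $Y_1$ is right-continuous at the origin, so $\overline X_R(e_{p_R})\to 0$ in probability, and bounded convergence yields $\Psi_R^+(w)\to 1$ for $\Re(w)\ge 0$, in particular $\Psi_R^+(\psi_1(s_1))\to 1$. For $\Psi_R^-$ I would use the Wiener-Hopf identity~\eqref{eq:WH} for $X_R,p_R$: $\Psi_R^+(w)\Psi_R^-(w)=p_R/(p_R-\psi_R(-w))$, and by~\eqref{eq:phi_phi2} $p_R-\psi_R(-w)=w/\Phi_1(w)-r_2w/\Phi_2(-w)$; dividing numerator and denominator by $r_2$ gives $p_R/(p_R-\psi_R(-w))\to -\mu_2\Phi_2(-w)/w$, hence $\Psi_R^-(w)\to -\mu_2\Phi_2(-w)/w$. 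Evaluating at $w=-\psi_2(s_2)$ and using $\Phi_2(\psi_2(s_2))=s_2$ for $s_2>\Phi_2(0)=0$ gives $\Psi_R^-(-\psi_2(s_2))\to \mu_2 s_2/\psi_2(s_2)$. Substituting these limits together with those of the previous paragraph into Theorem~\ref{thm:main}, the first term tends to $(\mu_2-r_1\mu_1^-)\,\Psi_L^+(\psi_1(s_1))/s_1$, and the second term, using $p_L'=\mu_2+r_1\mu_1$, tends to $\mu_2(\mu_2+r_1\mu_1)\,\tfrac{s_2}{s_1-r_1s_2}\,\tfrac{\psi_1(s_1)-\psi_1(r_1s_2)}{\psi_2(s_2)+\psi_1(r_1s_2)}\,\tfrac{1}{\psi_2(s_2)\Psi_L^-(-\psi_2(s_2))}$; this is precisely the asserted identity.

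The main obstacle I anticipate is making the two limiting steps fully rigorous: the sample-path identification of the $r_2=\infty$ model with the restricted dynamics (and the pointwise convergence of the survival probabilities) requires a careful, if routine, case analysis of the recursive construction at the ruin epoch; and upgrading the formal cancellation $\Psi_R^-(w)\to-\mu_2\Phi_2(-w)/w$ to a valid statement at the relevant point $w=-\psi_2(s_2)$ relies on $\Psi_R^+(\psi_1(s_1))$ staying bounded away from $0$, which is automatic here since it tends to~$1$. A clean alternative bypassing the probabilistic limit altogether is to rerun the Wiener-Hopf argument of the proof of Theorem~\ref{thm:main} directly from the restricted kernel equation displayed above: the auxiliary process on the ``right'' side then degenerates into a time change of $-Y_2$, which is exactly the source both of the vanishing of the corresponding positive Wiener-Hopf factor and of the factor $\mu_2 s_2/\psi_2(s_2)$.
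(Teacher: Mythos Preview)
Your proof is correct and follows essentially the same route as the paper: take $r_2\to\infty$ in Theorem~\ref{thm:main}, observing that $\Psi_L^\pm$ and $p_L'=r_1\mu_1+\mu_2$ are unaffected, that $\Psi_R^+(w)\to 1$ via $\overline X_R(e_{p_R})\le Y_1(e_{p_R})\to 0$ and then $\Psi_R^-(w)\to -\mu_2\Phi_2(-w)/w$ via~\eqref{eq:WH} and~\eqref{eq:phi_phi2}, and that $p_R'/r_2\to\mu_2-r_1\mu_1^-$. Your sample-path coupling argument justifying that the left-hand side converges to the transform of the restricted model is a welcome piece of rigour that the paper's own proof leaves implicit.
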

\begin{proof}
Let us examine the limiting quantities (as $r_2\rightarrow\infty$) from the statement of Theorem~\ref{thm:main}.
Firstly, $\Psi_L^\pm$ stay the same, but
\begin{align*}
&\Psi_R^+(w)\rightarrow 1, &\Psi_R^-(w)\rightarrow-\frac{\mu_2\Phi_2(-w)}{w}.
\end{align*}
To see this notice that $p_R\rightarrow\infty$ and so $\overline X_R(e_{p_R})\leq Y_1(e_{p_R})\rightarrow 0$ a.s., which yields the first limit.
The second is then obtained from~\eqref{eq:WH} (written for `R') and~\eqref{eq:phi_phi2}.
Finally, observe that $p_L'=r_1\mu_1+\mu_2, p_R'/r_2\rightarrow \mu_2-r_1\mu_1^-$ and take the limit in the equation of Theorem~\ref{thm:main}.
\end{proof}

Finally, let us assume that neither company can help the other, i.e.\ $r_1=r_2=\infty$.
Thus we retrieve the standard bivariate `or' problem, where the ruin means ruin in at least one line, see~\cite[Eq.\ (1.5)]{cai2005multivariate}.
The kernel equation becomes
\[\psi(s_1,s_2)F(s_1,s_2)=c_2F_1(s_1)+c_1F_2(s_2).\]
Now assume that $\mu_1,\mu_2>0$ and that $X_1,X_2$ are independent, and take the limit as $r_1\rightarrow\infty$
in Corollary~\ref{cor:restricted}. Noticing that
\begin{align*}
&\Psi_L^+(w)\rightarrow \frac{\mu_1\Phi_1(w)}{w}, &\Psi_L^-(w)\rightarrow1.
\end{align*}
we obtain
\[(\psi_1(s_1)+\psi_2(s_2))F(s_1,s_2)=\frac{\mu_1\mu_2}{\psi_1(s_1))}+\frac{\mu_1\mu_2}{\psi_2(s_2)}.\]
This yields the correct product formula
\[F(s_1,s_2)=\frac{\mu_1}{\psi_1(s_1))}\frac{\mu_2}{\psi_2(s_2))}\]
providing yet another check.

\section{Open problems}
\label{Open}
Various interesting directions for future work exist with respect to the present model.
Firstly, one may try to extend the model to allow for more general driving processes, that is for {\em \levy processes with negative jumps}.
This seems to be a hard problem contrary to many other settings, where such generalizations require little additional effort.
In fact, there are essential difficulties in each step leading to Theorem~\ref{thm:main}:
\begin{itemize}
 \item \emph{Definition of the model}. In general one can not use the recursive definition of Section~\ref{sec:recursive}.
The definition in Section~\ref{sec:skorokhod} (or a variant of it) seems to be a natural one. Nevertheless one has to establish that there is a unique solution to this modified Skorokhod problem.
\item \emph{The kernel equation}. We believe that Proposition~\ref{prop:kernel} holds for a general bivariate \levy process $(X_1,X_2)$ with negative jumps.
One may try to employ the generator of the underlying Markov process instead of an $o(h)$ reasoning.
There are various problems on this way including the required smoothness of the survival function.
Alternatively, one may try to use a martingale approach, but the choice of an appropriate martingale is far from trivial.
\item \emph{Identification of boundary measures}. One may follow the ideas from the proof of Theorem~\ref{thm:main}.
In the L\'evy case, a highly problematic step is to bound both sides of~\eqref{eq:entire} by a constant.
\end{itemize}

Secondly, it is important to understand if the kernel equation in Proposition~\ref{prop:kernel} characterizes the survival probability in some sense.
In the case of a reflected Brownian motion in a quadrant the basic adjoint relation characterizes the stationary distribution together with the boundary measures,
see~\cite[Thm.\ 3.4]{williams_survey}.
This property allowed Dai and Harrison~\cite{dai_numerical} to construct an algorithm computing the stationary distribution based on the basic adjoint relation.

Finally, one may try to interpret Theorem~\ref{thm:main} (and the quite similar Theorem~1 in~\cite{boxma_ivanovs}) to provide a probabilistic approach.
Moreover, one may consider some alternative risk measures of the model in~\eqref{eq:def} such as discounted external injections of capital.
One may also try to
extend the model to a setting with multiple companies. It seems that a probabilistic approach could be very helpful in this respect.

\noindent
{\bf Acknowledgment}
\\
The authors are indebted to Hansjörg Albrecher (Univ. Lausanne), Esther Frostig and David Perry (Univ. Haifa) for stimulating discussions.
The research of Onno Boxma was supported by the TOP-I grant {\em Two-dimensional Models in Queues and Risk} of the Netherlands Organisation for Scientific Research (NWO),
and Jevgenijs Ivanovs was supported by the Swiss National Science Foundation Project 200020\_143889.

\section*{Appendix}
\begin{proof}[Proof of~\eqref{eq:lh}]
Using monotonicity of $\phi$ in both arguments we see that the rhs of~\eqref{eq:lh}
provides a lower bound for $\phi(u,v)$.
The upper bound follows in a similar way.
\begin{align*}
\phi(u,v)\leq o(h)&+(1-\lambda h)\phi(u+c_1h,v+c_2h)\\
&+\l h \int_0^{u}\int_0^{v}\phi(u+c_1h-x,v+c_2h-y)\mu(\D x,\D y)\\
&+\l h \int_0^{u}\int_{v}^{\infty}\phi(u+c_1 h-x+r_2(v-y+c_2h),c_2h)\mu(\D x,\D y)
\\&+\l h \int_{u}^\infty\int_0^{v}\phi(c_1 h,v+c_2h-y+r_1(u-x+c_1 h))\mu(\D x,\D y).
\end{align*}
According to~\eqref{eq:bound} the difference of the bounds is of order $o(h)$, which completes the proof.
\end{proof}


\end{document}